\apptocmd{\sloppy}{\hbadness 10000\relax}{}{}
\apptocmd{\sloppy}{\vbadness 10000\relax}{}{}
\numberwithin{equation}{section}
\theoremstyle{plain}
\newtheorem{theorem}{Theorem}[section]
\newtheorem{proposition}[theorem]{Proposition}
\newtheorem{corollary}[theorem]{Corollary}
\newtheorem{lemma}[theorem]{Lemma}
\theoremstyle{definition}
\def\RR{\mathbb{R}}
\def\ZZ{\mathbb{Z}}
\newcommand{\diam}{\mathop\mathrm{diam}\nolimits}
\newcommand{\dist}{\mathop\mathrm{dist}\nolimits}
\newcommand{\side}{\mathop\mathrm{side}\nolimits}
\newcommand{\Span}{\mathop\mathrm{span}\nolimits}
\newcommand{\Graph}{\mathop\mathsf{Graph}\nolimits}
\newcommand{\Defect}{\mathop\mathsf{Defect}\nolimits}
\newcommand{\gap}{\mathop\mathrm{gap}\nolimits}
\newcommand{\res}{\hbox{ {\vrule height .22cm}{\leaders\hrule\hskip.2cm} }}
\newcommand{\Haus}{\mathcal{H}}
\newcommand{\Leaves}{\mathop\mathsf{Leaves}\nolimits}
\newcommand{\Top}{\mathop\mathsf{Top}\nolimits}
\newcommand{\spt}{\mathop\mathrm{spt}\nolimits}
\newcommand{\excess}{\mathop\mathrm{excess}\nolimits}
\begin{document}

\title{Radon measures and Lipschitz graphs}

\author{Matthew Badger \and Lisa Naples}
\thanks{The authors were partially supported by NSF DMS grant 1650546.}
\date{December 16, 2020}
\subjclass[2010]{Primary 28A75}
\keywords{Radon measure, Lipschitz graph, rectifiability, doubling, cone approximation}

\address{Department of Mathematics\\ University of Connecticut\\ Storrs, CT 06269-1009}
\email{matthew.badger@uconn.edu}
\address{Department of Mathematics\\ University of Connecticut\\ Storrs, CT 06269-1009}
\email{lisa.naples@uconn.edu}

\begin{abstract}For all $1\leq m\leq n-1$, we investigate the interaction of locally finite measures in $\RR^n$ with the family of $m$-dimensional Lipschitz graphs. For instance, we characterize Radon measures $\mu$, which are carried by Lipschitz graphs in the sense that there exist graphs $\Gamma_1,\Gamma_2,\dots$ such that $\mu(\RR^n\setminus\bigcup_1^\infty\Gamma_i)=0$, using only countably many evaluations of the measure. This problem in geometric measure theory was classically studied within smaller classes of measures, e.g.~for the restrictions of $m$-dimensional Hausdorff measure $\Haus^m$ to $E\subseteq \RR^n$ with $0<\Haus^m(E)<\infty$. However, an example of Cs\"ornyei, K\"{a}enm\"{a}ki, Rajala, and Suomala shows that classical methods are insufficient to detect when a general measure charges a Lipschitz graph. To develop a characterization of Lipschitz graph rectifiability for arbitrary Radon measures, we look at the behavior of coarse doubling ratios of the measure on dyadic cubes that intersect conical annuli. This extends a characterization of graph rectifiability for pointwise doubling measures by Naples by mimicking the approach used in the characterization of Radon measures carried by rectifiable curves by Badger and Schul.\end{abstract}

\maketitle

\tableofcontents

\section{Introduction}\label{sec:intro}

A general goal in geometric measure theory that has not yet been fully achieved is to understand in a systematic way how generic measures on a metric space interact with a prescribed family of sets in the space, e.g.~rectifiable curves, smooth submanifolds, etc. For instance, we could ask: Is a measure positive on some set in the family? Do there exist countably many sets in the family whose union captures all of the mass of the measure? For Hausdorff measures and measures with \emph{a priori} bounds on the asymptotic densities of the measure, much progress has been made. A description of this work as it stood at the end of the last century can be found can be found in \cite{Mattila}. Newer developments in the theory of rectifiability of absolutely continuous measures include \cite{AT15, Tolsa-n, TT-rect, ENV, Ghinassi, Goering-rect, Dabrowski-n,Dabrowski-s}. An alternative regularity condition that is usually \emph{a priori} weaker than upper and lower density bounds is asymptotic control on how much the measure grows when the radius of a ball is doubled. Recent investigations on the rectifiability of doubling measures include \cite{ADT1,ADT2,ATT-alphas,Naples-TST}. For Radon measures, which in general do not possess good bounds on density or doubling, the situation is far less understood and examples show that classical geometric measure theory methods are not strong enough to detect when measures charge Lipschitz images of Euclidean subspaces or graphs of Lipschitz functions \cite{MM1988}, \cite{CKRS}, \cite{GKS}, \cite{MO-curves}, \cite{Tolsa-betas}. On the positive side, we now possess a complete description of the interaction of an arbitrary Radon measure in Euclidean space with rectifiable curves \cite{BS3}. This advance required a thorough understanding of the geometry of subsets of rectifiable curves in Hilbert spaces \cite{Jones-TST,Ok-TST,Schul-Hilbert} and further blending geometric measure theory with techniques from modern harmonic analysis. For a longer overview of these and other related developments on generalized rectifiability of measures, including fractional and higher-order rectifiability,  see the survey \cite{ident}.

In this paper, we obtain a complete description of the interaction of Radon measures in $\RR^n$ with graphs of Lipschitz functions over $m$-dimensional subspaces for all $1\leq m\leq n-1$. Moreover, the characterization of Lipschitz graph rectifiability that we identify depends only on the value of the measure on dyadic cubes below a fixed generation. The key insight is that to construct Lipschitz graphs that charge a measure, one must be able to equitably distribute mass which appears in bad cones. For a detailed statement, see Theorem \ref{t:main} and the definition of cone defect. The connection between Lipschitz graphs and the geometry of measures have been studied for over ninety years, appearing in foundational work on the structure of Hausdorff measures in the plane \cite{Bes28,Bes38,MR44} and higher-dimensional Euclidean space \cite{Fed47}. Radon measures on smooth Lipschitz graphs supply a model for generalized surfaces in connection with Plateau's problem \cite{Almgren-survey,Plateau-again}. Beyond the domain of geometric measure theory, understanding  rectifiability of measures with respect to Lipschitz graphs is crucial in the study of boundedness of singular integral operators \cite{CMM-Lipschitz,DS91,DS93,David-Semmes-conjecture} and absolute continuity of harmonic measure on rough domains \cite{David-Jerison,Badger-nullsets,7author,AAM-advances}.

\subsection*{Cones and Lipschitz Graph Rectifiability} Throughout the paper, we fix integer dimensions $1\leq m\leq n-1$, where $n$ denotes the dimension of ambient space and $m$ denotes the dimension of a Lipschitz graph. A \emph{bad cone} $X=X(V,\alpha)$ is a set of the form $$X=\{x\in\RR^n:\dist(x,V)>\alpha\dist(x,V^\perp)\},$$ where $V\in G(n,m)$ is any $m$-dimensional subspace of $\RR^n$, $V^\perp\in G(n,n-m)$ denotes its orthogonal complement, and $\alpha\in(0,\infty)$. In other words, $X$ is the set of points which are relatively closer to $V^\perp$ than $V$. We exclude the degenerate case $\alpha=0$, which corresponds to $X=\RR^n\setminus V$. For every $x\in \RR^n$ and bad cone $X$, we let $X_x=x+X=\{x+y:y\in X\}$ denote the translate of $X$ with center $x$. The importance of this family of cones is that they yield a perfect test to determine when a set is contained in a Lipschitz graph.

\begin{lemma}[Geometric Lemma] \label{geometric-lemma} Let $V\in G(n,m)$, $\alpha\in(0,\infty)$, $X=X(V,\alpha)$, and $E\subseteq\RR^n$ be nonempty. There exists a Lipschitz function $f:V\rightarrow V^\perp$ with Lipschitz constant at most $\alpha$ such that $E$ is contained in $$\Graph(f)=\{(x,f(x)):x\in V\}\subseteq V\times V^\perp=\RR^n$$ if and only if $E\cap X_x=\emptyset$ for all $x\in E$. The conclusion also holds when $\alpha=0$.\end{lemma}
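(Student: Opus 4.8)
The plan is to work in coordinates adapted to the orthogonal splitting $\RR^n=V\times V^\perp$. Let $P$ and $Q$ denote the orthogonal projections onto $V$ and $V^\perp$, so that each $z\in\RR^n$ is written $z=(Pz,Qz)$, and observe that $\dist(z,V)=|Qz|$ and $\dist(z,V^\perp)=|Pz|$. Hence $X=\{z:|Qz|>\alpha|Pz|\}$ and $X_x=\{w:|Q(w-x)|>\alpha|P(w-x)|\}$, and the hypothesis ``$E\cap X_x=\emptyset$ for every $x\in E$'' is equivalent to the two-point inequality
\[
|Q(y-x)|\le\alpha|P(y-x)|\qquad\text{for all }x,y\in E.
\]
Everything that follows, including the case $\alpha=0$, will be derived from this reformulation, so no separate treatment of $\alpha=0$ should be needed.

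For the ``only if'' direction I would argue directly. If $E\subseteq\Graph(f)$ with $\Lip(f)\le\alpha$, then each point of $E$ has the form $x=(v,f(v))$ with $v=Px$, and for $x=(v,f(v)),\,y=(w,f(w))\in E$ one gets $|Q(y-x)|=|f(w)-f(v)|\le\alpha|w-v|=\alpha|P(y-x)|$, which is exactly the two-point inequality; hence $y\notin X_x$.

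For the ``if'' direction, assume the two-point inequality. Taking $P(y-x)=0$ forces $Q(y-x)=0$, so $P|_E$ is injective, and there is a well-defined map $f_0\colon P(E)\to V^\perp$ with $f_0(Px)=Qx$ for $x\in E$; the two-point inequality is precisely the statement that $f_0$ is $\alpha$-Lipschitz, and by construction $E\subseteq\Graph(f_0)$. It then remains to extend $f_0$ from the subset $P(E)$ of $V\cong\RR^m$ to all of $V$ without enlarging its Lipschitz constant, with values in $V^\perp\cong\RR^{n-m}$, and to take $f$ to be such an extension; then $\Lip(f)\le\alpha$ and $E\subseteq\Graph(f_0)\subseteq\Graph(f)$. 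When $\alpha=0$ the map $f_0$, hence $f$, is simply constant, so the same recipe applies verbatim.

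I expect the only genuine obstacle to be the sharp Lipschitz extension step: a naive coordinatewise McShane--Whitney extension of $f_0$ only controls the Lipschitz constant by $\sqrt{n-m}\,\alpha$, which is too large for the claimed bound. The correct tool is Kirszbraun's extension theorem (equivalently, Valentine's theorem) for Lipschitz maps between Euclidean spaces, which supplies an extension with the \emph{same} constant $\alpha$; invoking it is exactly what makes the argument close. Beyond that step, the proof is just careful bookkeeping in the $V\oplus V^\perp$ coordinates.
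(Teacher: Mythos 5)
Your proof is correct and is exactly the ``unwind the definitions'' argument the paper defers to (cf.\ the cited proof of Lemma 4.7 in \cite{DeLellis}): the two-point inequality $|Q(y-x)|\le\alpha|P(y-x)|$ is the right reformulation, and invoking Kirszbraun rather than a coordinatewise McShane extension is indeed what yields the sharp Lipschitz constant $\alpha$ claimed in the statement. Your observation that the $\alpha=0$ case is subsumed by the same argument (the induced map is constant) is also fine.
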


\begin{proof} Unwind the definitions or see e.g.~the proof in \cite[Lemma 4.7]{DeLellis}.\end{proof}

We adopt the convention that a \emph{Radon measure} $\mu$ on $\RR^n$ is a Borel regular outer measure that is finite on compact sets. At one extreme, given a nonempty family $\mathcal{F}$ of Borel sets, we say that $\mu$ is \emph{carried by $\mathcal{F}$} if there exist $F_1,F_2,\dots\in\mathcal{F}$ such that $\mu(\RR^n\setminus \bigcup_1^\infty F_i)=0$. At the other extreme, we say that $\mu$ is \emph{singular to $\mathcal{F}$} provided $\mu(F)=0$ for every $F\in\mathcal{F}$. For any Radon measure $\mu$ and Borel set $E\subseteq\RR^n$, the \emph{restriction} $\mu\res E$ defined by the rule $\mu\res E(A)=\mu(E\cap A)$ for all sets $A\subseteq\RR^n$ is again a Radon measure. The \emph{support} $\spt\mu$ of a Radon measure is the smallest closed set such that $\mu(\RR^n\setminus\spt\mu)=0$.

A Radon measure $\mu$ on $\RR^n$ is \emph{Lipschitz graph rectifiable} of dimension $m$ if $\mu$ is carried by \emph{$m$-dimensional Lipschitz graphs}, i.e.~graphs of Lipschitz functions $f:V\rightarrow V^\perp$ over subspaces $V\in G(n,m)$. For example, let $\Gamma_1,\Gamma_2,\dots$ be a sequence of Lipschitz graphs in $\RR^n$ with uniformly bounded Lipschitz constants, let $\mu_i=\Haus^m\res \Gamma_i$ denote the restriction of the $m$-dimensional Hausdorff measure $\mathcal{H}^m$ to $\Gamma_i$, and let $c_1,c_2,\dots\in(0,\infty)$ be a sequence of weights with $\sum_1^\infty c_i<\infty$. Then $\mu=\sum_1^\infty c_i\mu_i$ is an $m$-dimensional Lipschitz graph rectifiable Radon measure on $\RR^n$ with support equal to $\overline{\bigcup_1^\infty\Gamma_i}$. In particular, there exist Lipschitz graph rectifiable measures with $\spt\mu=\RR^n$.

The following classical criterion for Lipschitz graph rectifiability is due to Federer \cite[Theorem 4.7]{Fed47}. The theorem only applies to Radon measures satisfying the upper density bounds $0<\limsup_{r\downarrow 0} r^{-m}\mu(B(x,r))<\infty$ $\mu$-a.e. Such measures are strongly $m$-dimensional in the  sense that $\mu$ is carried by sets of finite Hausdorff measure $\mathcal{H}^m$ and singular to sets of zero $\mathcal{H}^m$ measure; e.g.~this can be shown using \cite[Theorem 6.9]{Mattila}.

\begin{theorem}[Federer] \label{federer-theorem} Let $\mu$ be a Radon measure on $\RR^n$. If for $\mu$-a.e.~$x\in\RR^n$, there exists a bad cone $X=X(V_x,\alpha_x)$ such that \begin{equation}\label{federer-condition}\limsup_{r\downarrow 0} \frac{\mu(X_x\cap B(x,r))}{r^m} < \frac{\alpha_x^m}{2\cdot 100^m} \limsup_{r\downarrow 0} \frac{\mu(B(x,r))}{r^m}<\infty,\end{equation} then $\mu$ is carried by $m$-dimensional Lipschitz graphs. \end{theorem}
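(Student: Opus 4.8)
The plan is to prove Federer's theorem by a Vitali-type exhaustion argument, reducing the global statement to a countable union of sets each of which is shown to be a subset of a single Lipschitz graph via the Geometric Lemma. First I would discretize the hypothesis: for $\mu$-a.e.\ $x$ there exist a subspace $V_x$, an aperture $\alpha_x$, a radius $r_x>0$, and finite densities controlling the ratios in \eqref{federer-condition} at all scales $r\le r_x$. Since $G(n,m)$ is separable and $(0,\infty)$ is separable, I would partition a full-measure set into countably many pieces $E_{V,\alpha,\rho,M,\varepsilon}$ on which $V_x$ is within a small angle of a fixed $V$, $\alpha_x$ is comparable to a fixed $\alpha$, $r_x\ge\rho$, the lower density $\limsup_{r\downarrow 0}r^{-m}\mu(B(x,r))$ lies in a dyadic window near $M$, and the cone ratio is bounded by $\varepsilon M$ with $\varepsilon < \alpha^m/(2\cdot 100^m)$ (up to absorbing the angle perturbation). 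Because $\mu$ is Radon, it suffices to further intersect with balls of radius $\le\rho/10$, so we may assume each piece has diameter less than $\rho$. It then suffices to show each such bounded piece $E$ is contained in a single Lipschitz graph over (a small rotation of) $V$ with Lipschitz constant comparable to $\alpha$; countably many such pieces exhaust $\mu$, giving the carrying conclusion.

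The core of the argument is the following claim: if $E$ is as above with diameter less than $\rho$, then for every $x\in E$ one has $E\cap X'_x=\emptyset$, where $X'=X(V,C\alpha)$ for a suitable constant $C$ depending only on $n$ and $m$. Suppose not: pick $x,y\in E$ with $y\in x+X(V,C\alpha)$, so $\dist(y-x,V) > C\alpha\,\dist(y-x,V^\perp)$. Set $r=|y-x|\le\rho$. The point $y$ lies in a thin conical region around $x$, and I would like to say that a definite proportion of the mass of $B(x,2r)$, or at least of $B(y,sr)$ for a small dimensional constant $s$, must lie inside $X_x$ (the bad cone with the original aperture $\alpha$, slightly shrunk). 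The mechanism is purely geometric: any ball $B(y,sr)$ with $y$ deep in the cone $X(V,C\alpha)_x$ is entirely contained in $X(V,\alpha)_x$ once $C$ is large enough relative to $s$ and the angle perturbation. Combined with the lower density bound at $y$ — namely $\mu(B(y,t))\ge \tfrac12 M t^m$ for all small $t$, which I can arrange at every point of $E$ after passing to the dyadic window and shrinking $\rho$ — this forces $\mu(X_x\cap B(x,2r)) \ge \mu(B(y,sr)) \gtrsim M (sr)^m$. On the other hand the cone-ratio hypothesis at $x$ gives $\mu(X_x\cap B(x,2r)) \le \varepsilon M (2r)^m$ with $\varepsilon < \alpha^m/(2\cdot100^m)$. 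Choosing $C$ (hence the relation between $C$, $s$, and the allowed angle perturbation) so that $s^m > 2^m\varepsilon/c_n$ for the relevant dimensional constant $c_n$ yields a contradiction. Hence $E\cap X'_x=\emptyset$ for all $x\in E$, and the Geometric Lemma (Lemma \ref{geometric-lemma}) produces the desired Lipschitz graph of constant at most $C\alpha$ containing $E$.

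I expect the main obstacle to be the quantitative geometry bridging the two density estimates: one must be careful that the lower-density inequality, which by hypothesis holds only in the $\limsup$ sense at each point, can be upgraded to a uniform "$\mu(B(y,t))\ge \tfrac12 M t^m$ for all $t\le t_0$" on a full-measure subset of each piece — this is where the factor $2$ and the constant $100^m$ in \eqref{federer-condition} are consumed, and it is the reason for partitioning the lower density into dyadic windows rather than just bounding it above. Equally delicate is tracking how much aperture is lost when we replace the point-dependent subspace $V_x$ by the fixed $V$: the angle between $V_x$ and $V$ must be taken small enough (another countable refinement) that a ball sitting inside $X(V_x,\alpha_x)_x$ still sits inside $X(V,\alpha')_x$ for some $\alpha'$ only slightly smaller than $\alpha$, and conversely that $E\cap X(V,C\alpha)_x=\emptyset$ still implies $E$ lies on a genuine Lipschitz graph over $V$. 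Once these bookkeeping steps are set up correctly, the contradiction computation itself is routine; the only real content is the elementary but slightly fiddly lemma that a ball of radius $sr$ centered at a point at cone-distance $\gtrsim r$ from $V$ is contained in the bad cone $X(V,\alpha)$, which I would isolate and prove first.
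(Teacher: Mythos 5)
The paper does not actually prove this theorem (it is quoted from Federer, in the formulation of Mattila's book), so I am comparing your proposal with the classical argument. Your outer scaffolding (countable uniformization in $V_x,\alpha_x,r_x$ and in a dyadic density window, restriction to pieces of small diameter, symmetry of the cones, and the final appeal to Lemma \ref{geometric-lemma}) is the right scheme. The gap is the step you flag but wave through: you cannot upgrade the hypothesis to ``$\mu(B(y,t))\geq\tfrac12 Mt^m$ for all $t\leq t_0$'' on a full-measure subset of each piece. Condition \eqref{federer-condition} only constrains \emph{upper} densities ($\limsup$'s), and positive lower density is a genuinely stronger pointwise property that can fail $\mu$-a.e.\ for measures satisfying the hypothesis: for example, a suitably lacunary measure supported on an $m$-plane $V$ has $\mu(X(V,\alpha)_x\cap B(x,r))=0$ identically, positive finite upper density, and lower density zero at $\mu$-a.e.\ point. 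No dyadic windowing, shrinking of $\rho$, or discarding of null sets can manufacture an all-scales lower bound from a $\limsup$; the good radii it provides form a point-dependent sequence. Your contradiction computation needs the lower bound at the one specific radius $s\,r$ with $r=|x-y|$ dictated by the offending pair, which is exactly where a sequence of good scales is useless; the same obstruction appears if you swap the roles of $x$ and $y$. So the central estimate of the proposal collapses, and your attribution of the constants $2$ and $100^m$ to this (impossible) upgrade is also not how they arise.

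The classical proof gets around this by running the density estimate in the opposite direction, at the point lying in the cone and along a sequence of scales generated by offending base points. Fix a uniformized piece $A$ on which the cone bound $\mu(X_x\cap B(x,r))\leq\lambda(\alpha r)^m$ holds for all $x\in A$ and all $r$ below a fixed threshold (this much \emph{is} an all-small-scales statement, since a $\limsup$ upper bound is eventually valid). If $x\in A$ is at distance $d$ from $y$ and $y$ lies well inside $X_x$, then the elementary inclusion you isolate gives $B(y,c\alpha d)\subseteq X_x\cap B(x,2d)$, hence $\mu(B(y,c\alpha d))\leq\lambda(2\alpha d)^m\approx 100^m\lambda\,(c\alpha d)^m$ --- an upper bound on the ball density at $y$ at the single scale $c\alpha d$. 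If offending points $x_k\in A$ existed with $d_k=|x_k-y|\to 0$, these bounds along $t_k=c\alpha d_k\to0$ would force $\limsup_{t\downarrow0}\mu(B(y,t))/t^m\leq 2\cdot100^m\lambda$, contradicting the right-hand inequality in \eqref{federer-condition} once $\lambda$ is tied to the dyadic density window. Hence each point of large upper density has a radius $r_y>0$ below which no such $x$ exists; by symmetry of the cone relation this says $A\cap X(V,\alpha')_y\cap B(y,r_y)=\emptyset$, and after a further countable decomposition by $r_y$ and by pieces of diameter smaller than $r_y$, Lemma \ref{geometric-lemma} applies. In short: the mass lower bound is used only through the $\limsup$ at the single point $y$, evaluated along scales produced by a hypothetical accumulating sequence of bad base points --- never at a prescribed scale --- and this non-accumulation radius $r_y$ plays the role your false uniform lower bound was meant to play. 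Reworking your second paragraph along these lines would repair the proof.
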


For general locally finite measures, there is no uniform comparison between the measure of a ball and the radius of the ball. Thus, it is natural to try to replace the normalizing factor $r^m$ in Federer's condition \eqref{federer-condition} with $\mu(B(x,r))$. In this direction, Naples obtained a characterization of Lipschitz graph rectifiability for measures in Hilbert space with pointwise bounded asymptotic doubling. See \cite[Theorem D]{Naples-TST}.

\begin{theorem}[Naples] \label{t:naples} Let $\mu$ be a Radon measure on $\RR^n$ or a locally finite Borel regular outer measure on the Hilbert space $\ell_2$. Assume that $\mu$ is pointwise doubling, i.e.~$$\limsup_{r\downarrow 0} \frac{\mu(B(x,2r))}{\mu(B(x,r))}<\infty\quad\text{at $\mu$-a.e.~$x$}.$$ Then $\mu$ is carried by $m$-dimensional Lipschitz graphs if and only if for $\mu$-a.e.~$x$ there exists a bad cone $X=X(V_x,\alpha_x)$ such that \begin{equation}\label{eq:cone-point} \lim_{r\downarrow 0} \frac{\mu(X_x\cap B(x,r))}{\mu(B(x,r))}=0.\end{equation}  \end{theorem}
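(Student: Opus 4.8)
The plan is to reduce both directions to the Geometric Lemma via a decomposition of $\mu$ into pieces on which the relevant cone condition holds uniformly. I would argue the forward direction ($\mu$ carried by Lipschitz graphs $\implies$ the pointwise cone null condition \eqref{eq:cone-point}) first, since it is the softer half. Assume $\mu(\RR^n\setminus\bigcup_i\Gamma_i)=0$ with $\Gamma_i=\Graph(f_i)$, $f_i:V_i\to V_i^\perp$ Lipschitz with constant $L_i$. For $\mu\res\Gamma_i$-a.e.\ point $x$ one should show that for any $\alpha>L_i$ the cone $X=X(V_i,\alpha)$ satisfies $X_x\cap\Gamma_i=\emptyset$ near $x$: indeed by the Geometric Lemma $\Gamma_i\cap X_x=\emptyset$ globally once $\alpha\ge L_i$, so in fact $\mu(X_x\cap B(x,r))$ picks up only the mass of $\mu$ off $\Gamma_i$. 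Since $\mu$ is pointwise doubling, $\mu$ is asymptotically doubling $\mu$-a.e., hence its density basis differentiates $L^1(\mu)$, and the Lebesgue density theorem gives $\mu(B(x,r)\setminus\Gamma_i)/\mu(B(x,r))\to 0$ for $\mu\res\Gamma_i$-a.e.\ $x$. Combining, $\mu(X_x\cap B(x,r))/\mu(B(x,r))\to 0$, so \eqref{eq:cone-point} holds $\mu$-a.e.\ on $\Gamma_i$, and summing over $i$ finishes this direction. Note this direction does not even need the doubling hypothesis if one is careful, but pointwise doubling gives the cleanest differentiation statement.

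For the converse ($\mu$-a.e.\ cone condition $\implies$ carried by Lipschitz graphs), I would first restrict attention to a set of positive measure on which everything is quantitatively uniform. Since $V_x$ and $\alpha_x$ are only measurably defined, use that $G(n,m)$ is compact and a standard exhaustion argument: up to a $\mu$-null set, decompose $\RR^n$ into countably many Borel sets $E_j$ on each of which (i) $V_x$ is within a small angle of a fixed subspace $V^{(j)}$, (ii) $\alpha_x\ge \alpha^{(j)}>0$, and (iii) the convergence in \eqref{eq:cone-point} is uniform — i.e.\ there is $r_j>0$ and a small $\eta_j$ with $\mu(X_x\cap B(x,r))\le \eta_j\,\mu(B(x,r))$ for all $x\in E_j$ and $0<r<r_j$, where $X=X(V^{(j)},\alpha^{(j)}/2)$ is a single fixed cone slightly fatter than the individual $X(V_x,\alpha_x)$ but still with positive aperture. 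This uniformization uses Egorov's theorem together with the continuity of $X(V,\alpha)$ in $(V,\alpha)$. It then suffices to show each $\mu\res E_j$ is carried by Lipschitz graphs.

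The heart of the matter is a \emph{Lipschitz-graph stopping-time / corona construction}: on a fixed $E_j$, build from a well-chosen base point a Lipschitz graph over $V^{(j)}$ that captures a definite proportion of $\mu\res E_j$ inside a ball of radius $\sim r_j$. Concretely, following the Naples scheme (which in turn mimics the Jones/Okikiolu/Schul traveling-salesman machinery adapted to graphs by David–Semmes type cone conditions), one runs a stopping-time argument on dyadic cubes $Q$ contained in a fixed cube of side $\sim r_j$: a cube is "good" if the cone condition at its center (inherited, up to controlled error, from the pointwise condition on $E_j$) forces the mass of $E_j\cap Q$ to sit in a thin slab around an affine graph over $V^{(j)}$; the stopped cubes are those where this fails, and the uniform smallness $\eta_j$ of the cone ratios forces $\sum_{\text{stopped }Q}\mu(Q)\le \tfrac12\mu(E_j\cap Q_0)$, say. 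On the good part one reconstructs a Lipschitz function over $V^{(j)}$ (using the Geometric Lemma: the slab/cone conditions at all scales say exactly that no point of the good set lies in the bad cone of another, hence the good set is a subset of a genuine Lipschitz graph with constant $\lesssim \alpha^{(j)}$). Iterating this on the leftover mass, one writes $\mu\res E_j$ as a countable sum of measures each carried by a single Lipschitz graph, so $\mu\res E_j$—and hence $\mu$—is Lipschitz graph rectifiable.

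The main obstacle is precisely the stopping-time/reconstruction step: extracting, from the merely \emph{asymptotic} and pointwise-in-$x$ smallness of the cone ratios (with $V_x,\alpha_x$ a priori varying), a single Lipschitz graph that captures a fixed fraction of the mass at a definite scale. This requires the uniformization above to be done with enough room that a \emph{fixed} cone works simultaneously for all points of $E_j$ at all small scales, and it requires a careful geometric argument that "small conical mass at every scale near every point" propagates to the statement "the good set avoids everyone's bad cone" so the Geometric Lemma applies. This is exactly where pointwise doubling is used in an essential way — without some doubling one cannot pass from $\mu(X_x\cap B(x,r))$ being small relative to $\mu(B(x,r))$ at the scale of $Q$ to control of the mass distribution of $E_j\cap Q$ across the dyadic children of $Q$, which is what drives the stopping-time bookkeeping. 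I expect the verification that the stopped mass is at most half the total (the key "Carleson packing / geometric decay" estimate) to be the most delicate computation, and the rest to be a routine, if lengthy, iteration.
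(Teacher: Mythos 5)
First, note that this paper does not prove Theorem \ref{t:naples} at all: it is quoted as a known result of Naples (cited as Theorem D of \cite{Naples-TST}), so there is no in-paper proof to compare against; your proposal has to stand on its own, and as written it has a genuine gap in the ``if'' direction. Your forward direction is fine (Geometric Lemma plus the Lebesgue density theorem; your aside that doubling is unnecessary there is only true in $\RR^n$, where Besicovitch differentiation is available --- in $\ell_2$ the density theorem is exactly where pointwise doubling is needed). The problems are in the converse. The first is the uniformization itself: you impose the lower bound $\alpha_x\geq\alpha^{(j)}$ and then claim the uniform estimate for the \emph{fatter} fixed cone $X(V^{(j)},\alpha^{(j)}/2)$. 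The monotonicity goes the other way: $X(V,\alpha)$ shrinks as $\alpha$ grows, so the hypothesis \eqref{eq:cone-point} at aperture $\alpha_x$ transfers only to cones \emph{contained} in $X(V_x,\alpha_x)$, i.e.\ to thinner cones with larger aperture (and a direction close to $V_x$). You must instead decompose so that $\alpha_x\leq\alpha^{(j)}$ on $E_j$ and work with a fixed aperture like $2\alpha^{(j)}$; with your choice, Egorov gives you nothing, since the quantity you want to make uniformly small is not one the hypothesis controls.

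The second and more serious gap is the heart of the converse, which you defer rather than prove. Once the cone ratios are uniformly $\leq\eta_j$ on $E_j$ at all scales $r<r_j$ for a single fixed cone, there is nothing left to ``stop'' on, so the corona/stopping-time scaffolding with its Carleson packing estimate (which you acknowledge you have not verified) is not the missing ingredient --- and with only qualitative, non-summable smallness of the ratios you should not expect a packing estimate anyway. What is actually needed, and what your sketch never supplies, is the two-point argument in which doubling enters: if $x,y\in E_j$ with $|x-y|=r$ small and $y\in X_x(V,\beta)$ for the uniform cone, then a ball $B(y,c\beta r)$ lies inside the slightly fatter cone $X_x(V,\beta/2)$ and inside $B(x,2r)$, and (after also uniformizing the doubling constant on $E_j$, which must be done \emph{before} choosing $\eta_j$) iterated doubling at $y$ gives $\mu(B(y,c\beta r))\gtrsim\mu(B(y,2r))\geq\mu(B(x,r))$, contradicting $\mu(X_x(V,\beta/2)\cap B(x,2r))\leq\eta_j\,\mu(B(x,2r))$ when $\eta_j$ is small. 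This shows no point of $E_j$ lies in the bad cone of another nearby point, so after chopping $E_j$ into pieces of diameter $\ll r_j$ the Geometric Lemma (Lemma \ref{geometric-lemma}) applies directly, with no corona construction. Your proposal names the place where doubling must be used but replaces this argument with a plan whose key estimate is left open, so as it stands the sufficiency direction is not proved.
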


The restriction to pointwise doubling measures in Theorem \ref{t:naples} is crucial. For general Radon measures in Euclidean space, which may be non-doubling, it is impossible to characterize Lipschitz graph rectifiability using condition \eqref{eq:cone-point} by an example of Cs\"ornyei, K\"{a}enm\"{a}ki, Rajala, and Suomala. See  \cite[Example 5.5]{CKRS}.

\begin{theorem}[Cs\"ornyei \emph{et al.}] There exists a non-zero Radon measure $\mu$ on $\RR^2$ and $V\in G(2,1)$ such that for all $\alpha>0$, condition \eqref{eq:cone-point} holds for $\mu$ and $X=X(V,\alpha)$ at $\mu$-a.e.~$x\in\RR^2$, and $\mu$ is singular to 1-dimensional Lipschitz graphs.\end{theorem}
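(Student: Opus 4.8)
The plan is to build $\mu$ by an iterated Cantor-type scheme in $\RR^2$, with $V$ the horizontal axis, in which the essential mechanism is that a Radon measure may fail to be doubling; this is forced, since for pointwise doubling measures Theorem~\ref{t:naples} gives the opposite conclusion. I would fix rapidly decreasing scales $1 = s_0 > s_1 > s_2 > \cdots$ with $s_{k+1}/s_k \to 0$ extremely fast, branching numbers $M_k \to \infty$, apertures $\sigma_k \downarrow 0$ with $\sum_k \sigma_k < \infty$, and non-uniform weights, and then construct a nested family of compact sets $E_0 \supseteq E_1 \supseteq \cdots$, each $E_k$ a finite disjoint union of axis-parallel squares of side $s_k$, together with probability measures $\mu_k$ assigning a prescribed (possibly very unequal) mass to each square of $E_k$; the $\mu_k$ converge weakly to a Radon measure $\mu$ with $\spt\mu \subseteq E := \bigcap_k E_k$. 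In passing from a square $Q$ of $E_k$ to its $M_k$ children of side $s_{k+1}$, I would (i) spread the children horizontally across (a subinterval of) the projection of $Q$ onto $V$; (ii) keep almost all of them within vertical distance $\sigma_k s_k$ of a common horizontal line through $Q$, so the $k$-th generation picture inside $Q$ is $\sigma_k$-flat relative to $V$; and (iii) let the remaining fine data --- the precise heights, the order in which the children are listed, the distribution of mass among them, and a small, rare amount of vertical ``displacement'' --- depend on $Q$ and vary from square to square, subject to summable Borel--Cantelli-type bounds that prevent a $\mu$-typical point from ever sitting (infinitely often) close to a displaced or mass-heavy sibling, yet so that along $\mu$-a.e.\ descending chain the local approximating directions and heights never stabilize in a Lipschitz-compatible way.

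\textbf{Flat cone condition.} Fix $\alpha > 0$ and a $\mu$-typical point $x$ with descending chain $Q_0 \supseteq Q_1 \supseteq \cdots$. For $r \in (0,1)$, choose $k$ with $s_{k+1} \le r < s_k$ and estimate $\mu(X_x(V,\alpha)\cap B(x,r))$ by splitting into (a) the children of $Q_{k-1}$ that $B(x,r)$ meets, and (b) what lies strictly inside $Q_k$. By $\sigma_{k-1}$-flatness, the children of $Q_{k-1}$ seen from $x$ that fall in $X_x(V,\alpha)$ are only those within horizontal distance $\lesssim \sigma_{k-1}s_{k-1}/\alpha$ of $x$, whose total mass is an $O(\sigma_{k-1}/\alpha)$-fraction of $\mu(B(x,r))$ once the Borel--Cantelli exclusions of (iii) have kicked in; recursing on (b) gives a geometrically smaller contribution. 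Hence $\mu(X_x(V,\alpha)\cap B(x,r)) = o(1)\cdot\mu(B(x,r))$ uniformly in $r$. Carrying this out carefully over all radius regimes --- $r \approx s_k$, $r \approx \sigma_{k-1}s_{k-1}$, and $r$ between consecutive generations --- and verifying it at $\mu$-a.e.\ $x$ simultaneously for all $\alpha$ (by intersecting over a countable dense set of apertures) is the technical heart, and I expect it to be the main obstacle. The tension is that every feature introduced to make $\mu$ non-rectifiable --- genuine turning or mass concentration at some scales --- directly endangers flatness at nearby scales; the only way to have both is to make the ``bad'' events rare in the quantitative Borel--Cantelli sense above, and this is precisely what makes $\mu$ badly non-doubling and thus able to slip past the obstruction of Theorem~\ref{t:naples}.

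\textbf{Singularity to Lipschitz graphs.} A Lipschitz graph over a subspace $V'$ making a sufficiently large angle with $V$ lies in a translate of a bad cone $X(V,\alpha)$, hence is $\mu$-null: at a density point, the ratio $\mu(\text{graph}\cap B(x,r))/\mu(B(x,r)) \to 1$ by the Besicovitch differentiation theorem, contradicting the cone condition. After the routine reduction that an arbitrary Lipschitz graph is covered by countably many pieces each of this type or a graph over $V$ itself, it remains to rule out $\mu(\Graph(g)) > 0$ for Lipschitz $g\colon V \to V^\perp$. Were this positive, then for $\mu$-a.e.\ $x = (x_1, g(x_1))$ in the graph one would have $\mu(B(x,r)\cap\Graph(g))/\mu(B(x,r)) \to 1$, and, by differentiability of $g$, for every $\beta > 0$ and all small $r$ the set $\Graph(g)\cap B(x,r)$ would lie in the slab through $x$ of slope $g'(x_1)$ and half-width $\beta r$; so it suffices that the construction force, at $\mu$-a.e.\ $x$, a constant $c > 0$ with the property that for every slope $\lambda$ and every $\beta > 0$ there are arbitrarily small $r$ carrying at least a $c$-fraction of $\mu(B(x,r))$ outside that slab. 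For $|\lambda|$ bounded away from $0$ this holds at the scales $r \approx s_{k-1}$, since the horizontally spread, $\sigma_{k-1}$-flat children of $Q_{k-1}$ put only an $O(\beta/|\lambda|)$-fraction of their mass into a slab of slope $\lambda$; for $\lambda$ near $0$ it is the non-stabilization in (iii) --- the local heights and directions of the chain deviating, at infinitely many generations, from any fixed nearly-horizontal slab by a definite proportion of the mass --- that does the work. Quantifying (iii), typically via Borel--Cantelli over the independent fine choices together with the prescribed mass distribution, then closes the argument.
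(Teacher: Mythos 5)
The paper does not actually prove this statement; it quotes it from Cs\"ornyei--K\"aenm\"aki--Rajala--Suomala \cite[Example 5.5]{CKRS}, so what matters here is whether your blind construction plan is internally sound. It is not: the mechanism you propose for ruling out Lipschitz graphs over $V$ itself contradicts the cone condition you are simultaneously trying to arrange. Concretely, with $V$ the horizontal axis, $X_x(V,\alpha)=\{y:|y_2-x_2|>\alpha|y_1-x_1|\}$. If $y\in B(x,r)$ lies outside the slab through $x$ of slope $\lambda$ and half-width $\beta r$, then $|y_2-x_2|>\beta r-|\lambda|r$, so for $|\lambda|\leq\beta/2$ we get $|y_2-x_2|>(\beta/2)r\geq(\beta/2)|y_1-x_1|$, i.e.\ $y\in X_x(V,\beta/2)$. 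Hence condition \eqref{eq:cone-point} at aperture $\alpha=\beta/2$, which you require at $\mu$-a.e.\ $x$ for \emph{every} $\alpha>0$, forces the mass of $B(x,r)$ outside every such nearly-horizontal slab to be $o(\mu(B(x,r)))$ as $r\downarrow 0$. This is irreconcilable with your requirement that at $\mu$-a.e.\ $x$ there be a constant $c>0$ and arbitrarily small $r$ carrying at least a $c$-fraction of $\mu(B(x,r))$ outside the slab. So the two halves of your plan cannot both hold at a $\mu$-typical point, and the case of graphs over $V$ (the crux of the whole example) is exactly where the plan breaks.

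The actual example must defeat nearly-horizontal graphs by a mechanism invisible to \eqref{eq:cone-point}: extreme non-doubling makes $\mu(B(x,r))$ overwhelmingly dominated by a cluster at scales much smaller than $r$, so the structure that obstructs containment in a Lipschitz graph carries a vanishing \emph{proportion} of the mass of each ball, yet is present in every set of positive measure. One then shows $\mu(\Graph(g))=0$ not by a fixed-proportion mass estimate at a single scale, but for instance by exhibiting, inside any positive-measure subset, pairs of points whose chords are arbitrarily steep (so no Lipschitz graph, of any constant, can contain the subset, via Lemma \ref{geometric-lemma}), with those steep partners contributing negligibly to the relevant ball masses. Your sketch correctly identifies that non-doubling is forced (by Theorem \ref{t:naples}) and that graphs transverse to $V$ are easy, but as written the proposal's quantitative goal in the nearly-horizontal case is self-contradictory, and the Borel--Cantelli bookkeeping that is supposed to reconcile flatness with non-rectifiability is never specified; replacing the ``definite proportion outside every slab'' requirement by a pairwise steep-chord (or unbounded-slope) argument of the above type is what is actually needed.
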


In a recent preprint \cite{Dabrowski-cones}, Dabrowski independently announced a characterization of $m$-dimensional Lipschitz graph rectifiable measures, which are absolutely continuous with respect to $\mathcal{H}^m$. The characterization is in terms of a Dini condition on conical densities of the form $r^{-m}\mu(X\cap B(x,r))$ and requires certain \emph{a priori} bounds on the lower and upper $m$-dimensional density on $\mu$; for related examples, see \cite{Dabrowski-examples}. In \cite{DNOI}, Del Nin and Obinna Idu supply an extension of Theorem \ref{federer-theorem} to $C^{1,\alpha}$ graphs.

\subsection*{Conical Defect} Our goal is to promote the characterization of subsets of Lipschitz graphs given by the geometric lemma to a characterization of Radon measures carried by Lipschitz graphs. Motivated by the characterization of Radon measures carried by rectifiable curves \cite{BS3}, we follow \cite[Remark 2.10]{ident} and design an anisotropic version of the geometric lemma for measures. For any nonempty set $Q\subseteq\RR^n$, let $X_Q=\bigcup_{x\in Q}X_x$ denote the union of bad cones centered on $Q$. In particular, suppose that $Q$ is a (half-open) dyadic cube, i.e.~a set of the form$$Q=\left[\frac{j_1}{2^k},\frac{j_1+1}{2^k}\right)\times\cdots\times \left[\frac{j_n}{2^k},\frac{j_n+1}{2^k}\right),\quad k,j_1,\dots,j_n\in\ZZ.$$ We denote the side length $2^{-k}$ of $Q$ by $\side Q$. Let $x_Q$ denote the geometric center of $Q$. For each $X=X(V,\alpha)$ with $\alpha\in(0,\infty)$, let $r_{Q,X}>0$ be sufficiently large such that if $R$ is a dyadic cube of the same generation as $Q$ and $R$ intersects the \emph{conical annulus} $A_{Q,X}$, $$A_{Q,X}=X_Q\cap B(x_Q,r_{Q,X})\setminus U(x_Q,r_{Q,X}/3),$$ then $R\cap B(x_Q,r_{Q,X}/4)=\emptyset$ and $\gap(R,X_x(V,\alpha/2)^c)\geq \diam Q$ for all $x\in Q$. Here $B(x,r)$ and $U(x,r)$ denote the closed and open balls centered at $x$ with radius $r$, respectively, $\diam Q$ denotes the diameter of $Q$, $S^c=\RR^n\setminus S$ for every set $S\subseteq\RR^n$, and $\gap(S,T)=\inf\{|s-t|:s\in S,\,t\in T\}$ for all nonempty sets $S,T\subseteq\RR^n$. (In harmonic analysis, $\gap(S,T)$ is often denoted by $\dist(S,T)$, but because the gap between sets fails the triangle inequality, we believe it should not be called a distance; our terminology comes from variational analysis, see e.g.~\cite{Beer}.) By Lemma \ref{bounded-geometry} below, we may choose $$r_{Q,X}=81\sqrt{n}\max(\alpha,1/\alpha)\side Q.$$ Define the \emph{discretized conical annulus} $\Delta^*_{Q,X}$ to be the set of all such $R$, i.e.~$R\in\Delta^*_{Q,X}$ if and only if $R$ is a dyadic cube, $\side R=\side Q$, and $R $ has nonempty intersection with $A_{Q,X}$.
\begin{figure}
\begin{center}\includegraphics[width=.8\textwidth]{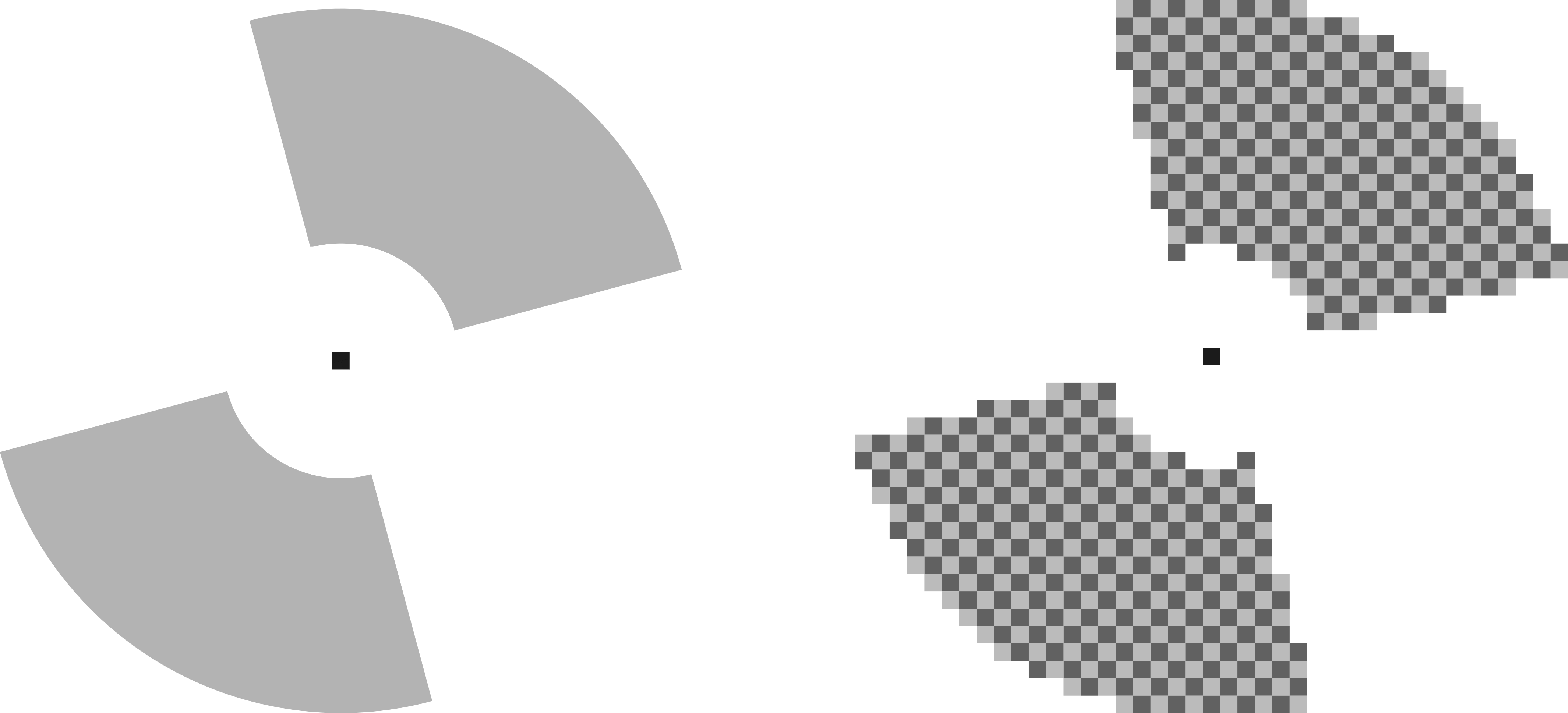}\end{center}
\caption{A conical annulus $A_{Q,X(V,\alpha)}$ (on the left) and its discretization $\Delta^*_{Q,X(V,\alpha)}$ (on the right), where $V=\Span \vec v$, $\vec v=(\cos(150^\circ),\sin(150^\circ))$, and $\alpha=1$.}
\end{figure} Of course, the discretized conical annulus covers the conical annulus, i.e.~$A_{Q,X}\subseteq\bigcup\Delta^*_{Q,X}$. In addition, define the \emph{dual discretized conical annulus} $\nabla^*_{R,X}$ by setting $$Q\in\nabla^*_{R,X}\quad\text{if and only if}\quad R\in \Delta^*_{Q,X}.$$ (In fact, it can be shown that $\nabla^*_{R,X}$ and $\Delta^*_{R,X}$ are the same set of dyadic cubes\footnote{We thank an anonymous referee for this observation.}, but $\nabla^*_{R,X}$ and $\Delta^*_{R,X}$ play different logical roles in the proofs below, so we use separate notation.) Note that each discretized region $\Delta^*_{Q,X}$ and $\nabla^*_{R,X}$ is a finite family of cubes with cardinality controlled by $n$ and $\alpha$. For every Radon measure $\mu$ on $\RR^n$, dyadic cube $Q$ with $\mu(Q)>0$, and bad cone $X=X(V,\alpha)$ with $\alpha\in(0,\infty)$, we define the \emph{conical defect} to be the quantity $$\Defect(\mu,Q,X)=\sum_{R\in\Delta^*_{Q,X}}\frac{\mu(R)}{\mu(Q)}\in[0,\infty).$$ We also set $\Defect(\mu,Q,X)=0$ if $\mu(Q)=0$. The conical defect is a weighted measurement of the mass of $\mu$ in the annular region $\bigcup \Delta^*_{Q,X}\supseteq A_{Q,X}$. It is anisotropic insofar as the normalization of each term $\mu(R)$ that appears in the defect depends on the value of the measure in cubes $Q\in\nabla^*_{R,X}$ emanating in different directions from the cube $R$.

\subsection*{Conical Dini Functions} For every Radon measure $\mu$ on $\RR^n$ and bad cone $X=X(V,\alpha)$ with $\alpha\in(0,\infty)$, we define the \emph{conical Dini function} $G_{\mu,X}:\RR^n\rightarrow[0,\infty]$, $$G_{\mu,X}(x)=\sum_{\side Q\leq 1} \Defect(\mu,Q,X)\,  \chi_Q(x)\quad\text{for all }x\in\RR^n,$$ where the sum ranges over all dyadic cubes $Q$ of side length at most 1 that contain $x$. The definition of $G_{\mu,X}(x)$ is similar in spirit to the definition of the density-normalized square functions in \cite{BS1,BS2,BS3}. The magnitude of the conical Dini function determines the interaction of $\mu$ with $m$-dimensional Lipschitz graphs. There are several possible ways to formulate this. Perhaps the most important is the following.

\begin{theorem}[Main Theorem] \label{t:main} Let $1\leq m\leq n-1$ be integers. Every Radon measure $\mu$ on $\RR^n$ decomposes uniquely as $\mu=\mu_G+\mu_G^\perp$, where $\mu_G$ is a Radon measure carried by $m$-dimensional Lipschitz graphs and $\mu_G^\perp$ is a Radon measure singular to $m$-dimensional Lipschitz graphs. The component measures are identified by \begin{align*}\mu_G&=\mu\res\{x\in\RR^n:G_{\mu,X}(x)<\infty \text{ for some bad cone }X\},\\
\mu_G^\perp &=\mu\res\{x\in\RR^n:G_{\mu,X}(x)=\infty \text{ for every bad cone }X\}.\end{align*} That is, there exists a sequence $\Gamma_1,\Gamma_2,\dots$ of $m$-dimensional Lipschitz graphs such that $\mu_G(\RR^n\setminus\bigcup_1^\infty\Gamma_i)=0$ and $\mu_G^\perp(\Gamma)=0$ for every $m$-dimensional Lipschitz graph $\Gamma$.
\end{theorem}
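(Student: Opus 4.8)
The plan is to prove the two halves of the decomposition separately, since the statement naturally splits into a positive part (the set where some conical Dini function is finite is carried by Lipschitz graphs) and a negative part (the complementary set is singular to Lipschitz graphs), after which uniqueness of the decomposition is automatic. Write $E_G=\{x:G_{\mu,X}(x)<\infty\text{ for some bad cone }X\}$ and $E_G^\perp=\RR^n\setminus E_G$, so that $\mu_G=\mu\res E_G$ and $\mu_G^\perp=\mu\res E_G^\perp$; one first checks these sets are Borel (each $G_{\mu,X}$ is a countable sum of characteristic functions of dyadic cubes, hence Borel measurable, and it suffices to range over a countable dense family of bad cones $X$, using that the defect varies controllably in $(V,\alpha)$). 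Once both containment statements are proved, uniqueness follows: if $\mu=\nu_1+\nu_1^\perp=\nu_2+\nu_2^\perp$ are two such decompositions, then $\nu_1-\nu_2=\nu_2^\perp-\nu_1^\perp$ is simultaneously carried by and singular to Lipschitz graphs, hence zero.

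\medskip

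\noindent\textbf{The positive half.} Here I would show $\mu\res E_G$ is carried by $m$-dimensional Lipschitz graphs. Stratify $E_G$ by fixing a countable dense set of bad cones $X_j=X(V_j,\alpha_j)$ and thresholds, writing $E_G=\bigcup_j E_j$ where on $E_j$ one has $G_{\mu,X_j}(x)\le M$ for a fixed constant $M$; it suffices to carry each $\mu\res E_j$ (after intersecting with a fixed large dyadic cube, so we may assume finite total mass). The core is a stopping-time / Corona construction in the spirit of \cite{BS3} and \cite{Naples-TST}: starting from a top cube $Q_0$, run a stopping-time argument on dyadic subcubes that stops a cube $Q$ when the accumulated conical defect along the chain from $Q_0$ to $Q$, i.e.\ roughly $\sum \Defect(\mu,Q',X_j)$ over ancestors $Q'\supseteq Q$ in $E_j$, first exceeds a threshold. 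Because $G_{\mu,X_j}\le M$ on $E_j$, most of the mass lies in cubes where this sum stays bounded; on each such "good tree" the smallness of the summed conical defect — combined with the geometric lemma in its anisotropic form, so that the defect controls the $\mu$-mass sitting in the bad cones $X_x(V_j,\alpha_j/2)$ over the tree's cubes — lets one build a single Lipschitz graph (Lipschitz constant $\sim\alpha_j/2$) capturing a definite proportion of the mass of that tree. Iterating over the (countably many) trees and over $j$ produces a countable family of graphs carrying $\mu\res E_G$. The key quantitative input is that a small total conical defect over a tree forces the tree's cubes, up to a small-mass exceptional set, to lie in a set satisfying the cone condition $E\cap X_x(V_j,\alpha_j/2)=\emptyset$ of Lemma \ref{geometric-lemma}; this is where the bounded-geometry choice $r_{Q,X}=81\sqrt n\max(\alpha,1/\alpha)\side Q$ and the gap condition $\gap(R,X_x(V,\alpha/2)^c)\ge\diam Q$ built into $\Delta^*_{Q,X}$ are used — they guarantee that a cube $R$ avoiding $\bigcup\Delta^*_{Q,X}$ genuinely stays out of the half-angle cone from all of $Q$.

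\medskip

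\noindent\textbf{The negative half.} Here I would show $\mu\res E_G^\perp$ is singular to $m$-dimensional Lipschitz graphs, i.e.\ $\mu(E_G^\perp\cap\Gamma)=0$ for every Lipschitz graph $\Gamma$. Suppose not: then there is a graph $\Gamma=\Graph(f)$, $f:V\to V^\perp$ with $\Lip f\le\alpha$, and a Borel set $A\subseteq E_G^\perp\cap\Gamma$ with $\mu(A)>0$. Pick a bad cone $X=X(V,\alpha')$ with $\alpha'>\alpha$ slightly larger than the Lipschitz constant, so that by the geometric lemma $\Gamma\cap X_x=\emptyset$ for every $x\in\Gamma$; in particular the cones used to form $\Delta^*_{Q,X}$ (which involve the half-angle cone $X(V,\alpha'/2)$) miss $\Gamma$ at the relevant scales. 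The goal is to show $G_{\mu,X}(x)<\infty$ at $\mu$-a.e.\ $x\in A$, contradicting $A\subseteq E_G^\perp$. For this, observe $\int_{Q_0} G_{\mu,X}\,d\mu=\sum_{Q\subseteq Q_0}\Defect(\mu,Q,X)\,\mu(Q)=\sum_Q\sum_{R\in\Delta^*_{Q,X}}\tfrac{\mu(Q)}{\mu(Q)}\mu(R)$ — more carefully, after the Fubini/duality swap using $\nabla^*_{R,X}$, this becomes $\sum_R \mu(R)\,\card\{Q\in\nabla^*_{R,X}: R\subseteq\text{scale of interest}\}$, and the point is that each $R$ appearing here must intersect $X_Q\cap(\text{annulus})$ for some $Q$ meeting $A\subseteq\Gamma$, which by the cone-avoidance forces $R$ to be "far from $\Gamma$" while still within a bounded multiple of $\side R$ of $\Gamma$ — a situation that, because $\mu$ is finite and $\Gamma$ is Ahlfors-regular-like only where $\mu$ sees it, makes the total sum finite on a positive-mass subset. (The honest version of this step localizes: work inside dyadic cubes where the density ratios of $\mu$ along $\Gamma$ are comparable to a constant, which exist by a standard density/Besicovitch argument since $\mu(A)>0$, and on those one bounds the defect sum by a geometric series.)

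\medskip

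\noindent The step I expect to be the main obstacle is the quantitative construction in the positive half — extracting an actual Lipschitz graph carrying a fixed fraction of a good tree's mass from the mere smallness of the summed conical \emph{defect}. The difficulty, exactly the one flagged in the introduction via the Cs\"ornyei--K\"aenm\"aki--Rajala--Suomala example, is that the defect is an anisotropic, mass-normalized quantity rather than a density-normalized one, so one cannot simply run Federer's argument; the "equitable distribution of mass in bad cones" heuristic must be turned into a genuine stopping-time decomposition where the dual discretized annuli $\nabla^*_{R,X}$ are used to redistribute and control the contribution of each cube, parallel to how \cite{BS3} handles the curve case. Making the bookkeeping of these two dual families consistent across scales, and ensuring the resulting stopped cubes actually assemble into the graph of a single Lipschitz function via Lemma \ref{geometric-lemma} applied to the set of surviving cube-centers, is the technical heart of the proof.
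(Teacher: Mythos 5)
Your overall architecture matches the paper's (a tree/stopping-time construction for the positive half, $\mu$-a.e.\ finiteness of $G_{\mu,X}$ on every Lipschitz graph for the negative half, standard uniqueness, and a reduction to countably many cones), but two of the steps you lean on are genuinely gapped. In the positive half you explicitly defer the key point, namely that finiteness of $\sum_{Q\in\mathcal{T}}\Defect(\mu,Q,X)\,\mu(Q)$ controls the mass of the offending cubes; this is precisely what the paper proves and is the entire reason the defect is normalized by $\mu(Q)$: level by level one writes $\sum_{\text{bad }R\in\mathcal{T}_i}\mu(R)=\sum_{\text{bad }R}\sum_{Q\in\nabla^*_{R,X}\cap\mathcal{T}}\mu(R)\mu(Q)/\mu\bigl(\bigcup\nabla^*_{R,X}\cap\mathcal{T}\bigr)\leq\sum_{Q\in\mathcal{T}_i}\Defect(\mu,Q,X)\,\mu(Q)$, then chooses $i_0$ so the tail is at most $\delta\,\mu(\Leaves(\mathcal{T}))$, and applies the geometric lemma to the leaf sets of the maximal bad-cube-free subtrees below level $i_0$ (each such leaf set lies, up to a $\mu$-null set, in a single graph of constant $\alpha$, not $\alpha/2$); the passage from the level set $\{G_{\mu,X}\leq N\}$ to trees with finite defect sum is then the localization lemma of \cite{BS3}. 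So your plan is right, but the ``main obstacle'' you flag is exactly the content you would need to supply. Also, your reduction to a countable family of cones via ``the defect varies controllably in $(V,\alpha)$'' is not justified: the discretized annuli $\Delta^*_{Q,X}$ jump as cubes enter and leave and $r_{Q,X}$ changes with $\alpha$, so nearby cones need not have comparable Dini functions. The paper avoids any continuity claim by noting there are only countably many possible configurations of $\Delta^*_{Q_0,X}$ and selecting one cone per configuration, making the reduction exact.

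The negative half as you describe it would not go through. First, taking $\alpha'$ only slightly larger than $\alpha$ does not make the half-angle cone $X(V,\alpha'/2)$ miss $\Gamma$; you need the cone opening to be at least $2\alpha$ (the paper proves Proposition \ref{integral-bound} for graphs of constant at most $\alpha/2$ relative to $X(V,\alpha)$ and then applies it with $X(V,2\alpha)$). Second, and more seriously, your proposed finiteness mechanism --- localizing to cubes where ``density ratios of $\mu$ along $\Gamma$ are comparable'' and summing a geometric series --- is both unavailable and unnecessary for general Radon measures: $\mu\res\Gamma$ can have upper $m$-density $0$ or $\infty$ almost everywhere, and eliminating density hypotheses is the whole point of the theorem (cf.\ the Cs\"ornyei--K\"aenm\"aki--Rajala--Suomala example). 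The correct argument uses no densities: since $\mu(\Gamma_0\cap Q)\leq\mu(Q)$, the normalizations cancel and $\int_{\Gamma\cap B(x_0,r_0)}G_{\mu,X}\,d\mu\leq\sum\mu\bigl(\bigcup\Delta^*_{Q,X}\bigr)$, summed over cubes $Q$ of side at most $1$ with $\mu(\Gamma_0\cap Q)>0$; the gap condition puts each non-tangential region $T_{Q,X}=\bigcup\Delta^*_{Q,X}$ at distance at least $\diam Q$ from $\Gamma$ while the annulus radius keeps it within $\lesssim_{n,\alpha}\diam Q$ of $\Gamma$, so regions from cubes at very different scales are disjoint and at comparable scales have bounded overlap, and the whole sum packs into $\mu(B(x_0,r_0+C)\setminus\Gamma)<\infty$, which is finite simply because $\mu$ is Radon. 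Your geometric observation that the cubes $R$ are ``far from $\Gamma$ but within a bounded multiple of $\side R$ of $\Gamma$'' is the right one, but its correct use is this bounded-overlap packing estimate, not a density localization.
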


The main theorem implies that to determine whether or not a measure charges some Lipschitz graph or is carried by Lipschitz graphs it is enough to \emph{evaluate the measure on only countably many sets}, e.g.~on dyadic cubes of side length at most 1.

\subsection*{Consequences} The first two corollaries are immediate applications of the main theorem. For variations on Corollary \ref{carried-by-graphs} and \ref{charge-a-graph}, which account for the direction $V$ and Lipschitz constant $\alpha$ of the underlying Lipschitz graphs, see \S\S\,2 and 3.

\begin{corollary}\label{carried-by-graphs} A Radon measure $\mu$ on $\RR^n$ is carried by $m$-dimensional Lipschitz graphs if and only if $G_{\mu,X}(x)<\infty$ for some bad cone $X=X(V_x,\alpha_x)$ for $\mu$-a.e.~$x\in\RR^n$.\end{corollary}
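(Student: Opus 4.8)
The plan is to deduce the corollary directly from the Main Theorem (Theorem \ref{t:main}); no new geometry is required, only an unwinding of the definitions of ``carried by'' and ``singular to'' together with the uniqueness of the decomposition $\mu=\mu_G+\mu_G^\perp$. First I would introduce the shorthand $A=\{x\in\RR^n:G_{\mu,X}(x)<\infty\text{ for some bad cone }X\}$, so that Theorem \ref{t:main} reads $\mu_G=\mu\res A$ and $\mu_G^\perp=\mu\res(\RR^n\setminus A)$, with $\mu_G$ carried by $m$-dimensional Lipschitz graphs and $\mu_G^\perp$ singular to them. The key observation is that the condition ``$G_{\mu,X}(x)<\infty$ for some bad cone $X=X(V_x,\alpha_x)$ at $\mu$-a.e.~$x$'' is literally the statement $\mu(\RR^n\setminus A)=0$, equivalently $\mu_G^\perp(\RR^n)=0$. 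So the corollary is equivalent to: $\mu$ is carried by $m$-dimensional Lipschitz graphs if and only if $\mu_G^\perp=0$.

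For the ``if'' direction I would argue that $\mu(\RR^n\setminus A)=0$ forces $\mu_G^\perp=\mu\res(\RR^n\setminus A)$ to be the zero measure, whence $\mu=\mu_G$ is carried by $m$-dimensional Lipschitz graphs by Theorem \ref{t:main}.

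For the ``only if'' direction, assume $\mu$ is carried by $m$-dimensional Lipschitz graphs, say $\mu(\RR^n\setminus\bigcup_1^\infty\Gamma_i)=0$ with each $\Gamma_i$ an $m$-dimensional Lipschitz graph. Since $\mu_G^\perp\leq\mu$, we get $\mu_G^\perp(\RR^n\setminus\bigcup_1^\infty\Gamma_i)=0$; and since $\mu_G^\perp$ is singular to $m$-dimensional Lipschitz graphs, $\mu_G^\perp(\Gamma_i)=0$ for every $i$, so $\mu_G^\perp(\bigcup_1^\infty\Gamma_i)=0$ by countable subadditivity. Adding these, $\mu_G^\perp(\RR^n)=0$, i.e.~$\mu(\RR^n\setminus A)=0$, which is the desired conclusion.

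The only point requiring any thought is this last step, namely that being carried by $m$-dimensional Lipschitz graphs forces the singular part $\mu_G^\perp$ to vanish: here one genuinely uses \emph{both} halves of the defining property of $\mu_G^\perp$ (it lives on the complement of $A$, and it annihilates every individual Lipschitz graph). Everything else is bookkeeping, and I do not anticipate any substantial obstacle beyond invoking Theorem \ref{t:main} itself.
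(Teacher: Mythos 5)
Your proposal is correct and is exactly the argument the paper has in mind: Corollary \ref{carried-by-graphs} is stated there as an ``immediate application'' of Theorem \ref{t:main}, and your unwinding of ``carried by'' and ``singular to'' via the decomposition $\mu=\mu_G+\mu_G^\perp$ (with the only nontrivial step being that a measure carried by Lipschitz graphs and simultaneously singular to them must vanish) is the intended reasoning. Nothing is missing.
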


\begin{corollary}\label{charge-a-graph}A Radon measure $\mu$ on $\RR^n$ charges some $m$-dimensional Lipschitz graph, i.e.~$\mu(\Gamma)>0$ for some Lipschitz graph $\Gamma$, if and only if there exists $E\subseteq\RR^n$ with $\mu(E)>0$ such that $G_{\mu,X}(x)<\infty$ for some bad cone $X=X(V_x,\alpha_x)$ for each $x\in E$.\end{corollary}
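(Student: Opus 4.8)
The plan is to deduce the corollary directly from the Main Theorem (Theorem \ref{t:main}); no new ideas are needed. First I would introduce the set
$$A=\{x\in\RR^n:G_{\mu,X}(x)<\infty\text{ for some bad cone }X\},$$
so that, in the notation of Theorem \ref{t:main}, $\mu_G=\mu\res A$ and $\mu_G^\perp=\mu\res A^c$, and recall from that theorem that $\mu_G$ is carried by $m$-dimensional Lipschitz graphs while $\mu_G^\perp(\Gamma)=0$ for every $m$-dimensional Lipschitz graph $\Gamma$.

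For the forward implication, suppose $\mu(\Gamma)>0$ for some $m$-dimensional Lipschitz graph $\Gamma$. Since $\mu_G^\perp(\Gamma)=\mu(A^c\cap\Gamma)=0$, subadditivity of the outer measure gives
$$0<\mu(\Gamma)\leq\mu(A\cap\Gamma)+\mu(A^c\cap\Gamma)=\mu(A\cap\Gamma).$$
Hence $E:=A\cap\Gamma$ satisfies $\mu(E)>0$, and, by the very definition of $A$, for each $x\in E$ there is a bad cone $X=X(V_x,\alpha_x)$ with $G_{\mu,X}(x)<\infty$, as required. (One could equally well take $E=A$.)

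For the converse, suppose $E\subseteq\RR^n$ satisfies $\mu(E)>0$ and, for every $x\in E$, there is a bad cone $X=X(V_x,\alpha_x)$ with $G_{\mu,X}(x)<\infty$. Then $E\subseteq A$, so $\mu(A)\geq\mu(E)>0$ by monotonicity of the outer measure, i.e.~$\mu_G=\mu\res A\neq 0$. Since $\mu_G$ is carried by $m$-dimensional Lipschitz graphs, I would pick graphs $\Gamma_1,\Gamma_2,\dots$ with $\mu_G(\RR^n\setminus\bigcup_i\Gamma_i)=0$, so that $\sum_i\mu_G(\Gamma_i)\geq\mu_G\bigl(\bigcup_i\Gamma_i\bigr)=\mu_G(\RR^n)>0$. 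Consequently $\mu_G(\Gamma_i)>0$ for some $i$, whence $\mu(\Gamma_i)\geq\mu_G(\Gamma_i)>0$, so $\mu$ charges the Lipschitz graph $\Gamma_i$.

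I do not expect any obstacle: the corollary is a short logical consequence of Theorem \ref{t:main}. The only facts it uses are that $\mu_G^\perp$ annihilates every $m$-dimensional Lipschitz graph (forward direction) and that $\mu_G$ is carried by $m$-dimensional Lipschitz graphs (converse), both of which the Main Theorem supplies verbatim; everything else is subadditivity and monotonicity of the outer measure.
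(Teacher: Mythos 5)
Your proof is correct and follows exactly the route the paper intends: the paper states Corollary \ref{charge-a-graph} as an immediate consequence of Theorem \ref{t:main}, and your argument—using that $\mu_G^\perp$ annihilates every Lipschitz graph for the forward direction and that $\mu_G=\mu\res A$ is carried by Lipschitz graphs for the converse, plus subadditivity and monotonicity—is precisely that deduction.
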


A basic geometric measure-theoretic fact is that every $m$-dimensional Lipschitz graph in $\RR^n$ has locally finite $m$-dimensional packing measure. Further, typical points of sets of finite $s$-dimensional packing measure have positive lower $s$-dimensional density. Hence we discover the following relationship between the conical Dini functions for $\mu$ and the lower $m$-dimensional density for $\mu$. For details, see e.g.~\cite[Lemma 2.7, 2.8]{BS1}.

\begin{corollary}Let $\mu$ be a Radon measure on $\RR^n$. \begin{enumerate}
\item  At $\mu$-a.e.~$x\in\RR^n$ such that $G_{\mu,X}(x)<\infty$ for some bad cone $X$, we have $$\underline{D}^m(\mu,x)=\liminf_{r\downarrow 0}\frac{\mu(B(x,r))}{r^m}>0.$$
\item At $\mu$-a.e.~$x\in\RR^n$ such that $\underline{D}^m(\mu,x)=0$, we have $G_{\mu,X}(x)=\infty$ for every bad cone $X$.\end{enumerate}\end{corollary}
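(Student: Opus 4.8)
The plan is to derive the corollary from the Main Theorem (Theorem~\ref{t:main}) together with two standard facts from the dimension theory of measures: that every $m$-dimensional Lipschitz graph in $\RR^n$ has locally finite $m$-dimensional packing measure $\mathcal{P}^m$, and that finiteness of $\mathcal{P}^m$ forces positive lower $m$-density. I would first observe that items (1) and (2) assert exactly the same thing. Put
\[
E=\{x\in\RR^n:G_{\mu,X}(x)<\infty\text{ for some bad cone }X\},\qquad Z=\{x\in\RR^n:\underline D^m(\mu,x)=0\}.
\]
A point satisfies ``$G_{\mu,X}(x)=\infty$ for every bad cone $X$'' precisely when $x\notin E$, so the exceptional set for (1) is $\{x\in E:\underline D^m(\mu,x)=0\}=E\cap Z$ and the exceptional set for (2) is $\{x\in Z:x\in E\}=E\cap Z$; both statements amount to $\mu(E\cap Z)=0$.

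To prove $\mu(E\cap Z)=0$, I would use Theorem~\ref{t:main} to write $\mu\res E=\mu_G$, which is carried by a sequence $\Gamma_1,\Gamma_2,\dots$ of $m$-dimensional Lipschitz graphs. Each $\Gamma_i$ has locally finite $\mathcal{P}^m$ (the geometric measure-theoretic fact quoted above; cf.~\cite[Lemma 2.7]{BS1}), so after intersecting the $\Gamma_i$ with the balls $B(0,k)$ and re-indexing, $\mu\res E$ is carried by countably many Borel sets $A_1,A_2,\dots$ with $\mathcal{P}^m(A_j)<\infty$ for all $j$; that is, $\mu(E\setminus\bigcup_jA_j)=0$. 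It therefore suffices to show $\mu(A_j\cap Z)=0$ for each $j$. For this I would invoke the packing-measure density comparison: if $\nu$ is a Radon measure, $A\subseteq\RR^n$ is Borel, $t>0$, and $\underline D^m(\nu,x)\le t$ for all $x\in A$, then $\nu(A)\le C(m)\,t\,\mathcal{P}^m(A)$ (cf.~\cite[Lemma 2.8]{BS1} or \cite[\S6]{Mattila}). Applying this with $\nu=\mu\res A_j$ and $A=A_j\cap\{x:\underline D^m(\mu\res A_j,x)\le t\}$ gives $\mu\bigl(A_j\cap\{\underline D^m(\mu\res A_j,\cdot)\le t\}\bigr)\le C(m)\,t\,\mathcal{P}^m(A_j)$; letting $t\downarrow0$ yields $\mu\bigl(A_j\cap\{\underline D^m(\mu\res A_j,\cdot)=0\}\bigr)=0$. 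Since $\mu\res A_j\le\mu$, we have $\underline D^m(\mu,x)\ge\underline D^m(\mu\res A_j,x)$ for every $x$, so $A_j\cap Z\subseteq A_j\cap\{\underline D^m(\mu\res A_j,\cdot)=0\}$ and hence $\mu(A_j\cap Z)=0$. Combining this over $j$ with $\mu(E\setminus\bigcup_jA_j)=0$ gives $\mu(E\cap Z)=0$, which is both (1) and (2).

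I do not expect a genuine obstacle. The substance lies entirely in the two cited packing-measure facts, and the only steps deserving a sentence of care are: (i) that a Lipschitz graph has locally finite $\mathcal{P}^m$, which follows because the orthogonal projection $\Gamma_i\to V$ is bi-Lipschitz and $\mathcal{P}^m$ is comparable to Lebesgue measure on $V\cong\RR^m$; (ii) the density comparison lemma, whose proof is a Vitali covering argument using, at each point of $A$, balls whose radii nearly realize the liminf defining $\underline D^m$; and (iii) the trivial monotonicity $\underline D^m(\mu,\cdot)\ge\underline D^m(\mu\res A_j,\cdot)$, which transfers the conclusion from the localized measure $\mu\res A_j$ back to $\mu$.
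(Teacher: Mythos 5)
Your proposal is correct and follows essentially the same route as the paper, which deduces the corollary from Theorem \ref{t:main} together with the two quoted packing-measure facts (Lipschitz graphs have locally finite $\mathcal{P}^m$, and on sets of finite $\mathcal{P}^m$ a Radon measure has positive lower $m$-density a.e.), citing \cite[Lemmas 2.7, 2.8]{BS1} for the details. Your write-up simply makes explicit the reduction of both items to $\mu(E\cap Z)=0$ and the Vitali-covering proof of the density comparison, which matches the intended argument.
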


It is a difficult, open problem to obtain similar theorems for Radon measures and Lipschitz images of $\RR^m$ in $\RR^n$ when $2\leq m\leq n-1$. (However, the case when $\mu$ vanishes on sets of zero $m$-dimensional Hausdorff measure is well understood, see e.g.~\cite{Mattila}.) The obstacle is back at the beginning of the paper: there is no known substitute for Lemma \ref{geometric-lemma} that characterizes subsets of Lipschitz images. Even the case $m=2$, $n=3$ is wide open. For related work on subsets of alternative classes of higher-dimensional curves and surfaces, see \cite{AS-TST, BNV, ENV-Banach, Hyde-TST} and the references therein.

\subsection*{Organization} We establish sufficient conditions for Lipschitz graph rectifiability in \S\ref{sec:sufficient}, followed by necessary conditions in \S\ref{sec:necessary}. Using these results, we prove Theorem \ref{t:main} in \S \ref{sec:main}. Finally, we discuss variations on the main theorem in \S\ref{sec:variation}.

\section{Sufficient Conditions}\label{sec:sufficient} We adopt the following standard notation. We write $C=C(p,q,\dots)$ to denote that $0<C<\infty$ is a constant depending on at most the parameters $p,q,\dots$. The value of $C$ may change from line to line. The notation $a\lesssim_{p,q,...}\! b$ is short hand for $a\leq C(p,q,\dots)\,b$.

\begin{lemma}\label{bounded-geometry} Let $X=X(V,\alpha)$ be a bad cone over $V\in G(n,m)$ with opening $\alpha\in(0,\infty)$. In the definition of the conical defect, we may choose the radius $$r_{Q,X}=81\sqrt{n}\max(\alpha,1/\alpha)\side Q.$$ For every dyadic cube $Q$ and for every $x\in Q$,  $$X_x\cap B(x,s_{Q,X})\setminus U\left(x,\tfrac{1}{2}s_{Q,X}\right)\subseteq A_{Q,X},\quad \text{where }s_{Q,X}=r_{Q,X}-\sqrt{n}\side Q.$$ There exists a constant $C_1=C_1(n,\alpha)$ such that for every dyadic cube $Q$ and $R\in\Delta^*_{Q,X}$, the Hausdorff distance between $Q$ and $R$ is at most $C_1\side Q$. Moreover, there exists a constant $C_2=C_2(n,\alpha)<\infty$ such that $\Delta^*_{Q,X}$ and $\nabla^*_{R,X}$ have cardinality at most $C_2$ for every dyadic cube $Q$ and $R$ in $\RR^n$. \end{lemma}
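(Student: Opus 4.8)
The plan is to verify each of the four assertions in turn, all of them by elementary Euclidean geometry with the explicit radius $r_{Q,X}=81\sqrt n\,\max(\alpha,1/\alpha)\side Q$ in hand. Throughout, write $r=r_{Q,X}$, $\ell=\side Q$, and recall $\diam Q=\sqrt n\,\ell$. The key elementary fact I would use repeatedly is that the bad cone $X(V,\alpha)$ is a positive cone, so $X_x\subseteq X_{x'}$ fails in general, but translating the apex by a vector of length $\rho$ moves the cone boundary by at most $\rho$ in Hausdorff distance on any bounded region; more precisely, for $|x-x'|\le\rho$ one has $X_{x'}\cap B(x',R)\setminus U(x',R/3)$ nearly contained in $X_x$ once $R/3\gtrsim\rho/\alpha$. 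I would isolate this as the quantitative comparison lemma between $X(V,\alpha)$ and $X(V,\alpha/2)$: if $y\in X_x(V,\alpha)$ with $|y-x|\ge t$, then the ball $B(y,\tfrac{\alpha}{2(\alpha+1)}t)$ — up to adjusting constants — is contained in $X_x(V,\alpha/2)$, because the "angular margin" between the two cones is comparable to $\alpha$ and the linear width of that margin at distance $t$ from the apex is comparable to $\alpha t$.

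\emph{First claim (the choice of $r_{Q,X}$ works).} By the defining requirements, $r$ must be large enough that any dyadic cube $R$ of generation $\side R=\ell$ meeting $A_{Q,X}=X_Q\cap B(x_Q,r)\setminus U(x_Q,r/3)$ satisfies (a) $R\cap B(x_Q,r/4)=\emptyset$ and (b) $\gap(R,X_x(V,\alpha/2)^c)\ge\diam Q=\sqrt n\,\ell$ for all $x\in Q$. For (a): a point of $R$ lying in $A_{Q,X}$ is at distance $\ge r/3$ from $x_Q$, and $\diam R=\sqrt n\,\ell$, so every point of $R$ is at distance $\ge r/3-\sqrt n\,\ell$ from $x_Q$; since $r=81\sqrt n\max(\alpha,1/\alpha)\ell\ge 81\sqrt n\,\ell$, we get $r/3-\sqrt n\,\ell\ge r/3-r/81>r/4$. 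For (b): if $z\in R$ lies in $A_{Q,X}$, then $z\in X_{x'}(V,\alpha)$ for some $x'\in Q$ with $|z-x'|\ge r/3-\diam Q$ (as $x'\in Q$ and $z\notin U(x_Q,r/3)$, using $\diam Q\le r/81$ this is $\ge r/4$), and for any $x\in Q$ we have $|z-x|\ge r/4$ and $z\in X_x(V,\alpha)$ as well after translating the apex by $\le\diam Q$ — here I use the comparison lemma above, whose hypothesis $|z-x|\gtrsim\diam Q/\alpha$ is guaranteed by the factor $\max(\alpha,1/\alpha)$ in $r$. The comparison lemma then gives $B(z,c\alpha\cdot r/4)\subseteq X_x(V,\alpha/2)$ with $c$ an absolute constant; since $r/4\gtrsim\sqrt n\max(\alpha,1/\alpha)\ell$, the radius $c\alpha r/4$ is $\ge\sqrt n\,\ell=\diam Q$ once the absolute constant $81$ is large enough to absorb $c$, and this says precisely $\gap(z,X_x(V,\alpha/2)^c)\ge\diam Q$; taking the infimum over $z\in R$ gives (b). Bookkeeping the absolute constants to confirm $81$ suffices is the one slightly fussy point, but it is routine.

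\emph{Second claim (inner approximation $s_{Q,X}$).} Fix $x\in Q$ and $y\in X_x$ with $\tfrac12 s\le|y-x|\le s$ where $s=r-\sqrt n\,\ell=r-\diam Q$. Since $x\in Q\subseteq B(x_Q,\diam Q/2)$ we get $|y-x_Q|\le s+\diam Q/2\le r$, so $y\in B(x_Q,r)$; also $|y-x_Q|\ge\tfrac12 s-\diam Q/2=\tfrac12 r-\tfrac12\diam Q-\tfrac12\diam Q=\tfrac12 r-\diam Q>r/3$ because $\diam Q\le r/81$. And $y\in X_x\subseteq X_Q$. Hence $y\in A_{Q,X}$, as claimed. \emph{Third claim (Hausdorff distance $Q$ to $R$).} If $R\in\Delta^*_{Q,X}$ then $R$ meets $A_{Q,X}\subseteq B(x_Q,r)$, so every point of $R$ is within $r+\diam R=r+\diam Q\le 2r$ of $x_Q$, hence within $2r+\diam Q/2$ of every point of $Q$; conversely every point of $Q$ is within $\diam Q/2\le r$ of $x_Q$ and hence within $3r$ of every point of $R$. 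So the Hausdorff distance is $\le 3r=C_1(n,\alpha)\ell$ with $C_1=243\sqrt n\max(\alpha,1/\alpha)$ (the exact constant is irrelevant). \emph{Fourth claim (cardinality bound).} All cubes of $\Delta^*_{Q,X}$ have the same side length $\ell$ and lie in $B(x_Q,2r)$ by the previous paragraph, and dyadic cubes of a fixed generation are pairwise disjoint, so their number is at most $(\,\lceil 4r/\ell\rceil+1)^n=(C_2')^n=:C_2(n,\alpha)<\infty$ by a volume count. For $\nabla^*_{R,X}$: if $Q\in\nabla^*_{R,X}$ then $R\in\Delta^*_{Q,X}$, so by the third claim $Q$ lies within Hausdorff distance $C_1\side R$ of $R$ (note $\side Q=\side R$ forces $C_1$ to be the same constant), hence $Q\subseteq B(x_R, (C_1+1)\side R)$, and again disjointness of same-generation dyadic cubes plus a volume count bounds the number by a constant $C_2(n,\alpha)$. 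Enlarging $C_2$ to handle both families completes the proof.

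\emph{Main obstacle.} The only substantive step is the quantitative cone-comparison lemma used in the first claim — showing that a point deep inside $X_x(V,\alpha)$ (at distance $\gtrsim\diam Q/\alpha$ from the apex) admits a ball of radius $\ge\diam Q$ around it sitting inside $X_x(V,\alpha/2)$, uniformly as the apex $x$ ranges over $Q$. This is where the specific shape of the radius $r_{Q,X}$, in particular the factor $\max(\alpha,1/\alpha)$, is forced: for small $\alpha$ the angular gap between $X(V,\alpha)$ and $X(V,\alpha/2)$ is only $\asymp\alpha$, so one must be $\asymp 1/\alpha$ times farther from the apex to open up a unit-width margin, while for large $\alpha$ the cone $X(V,\alpha)$ is a thin neighborhood of $V^\perp$ and one needs room $\asymp\alpha$ to fit $Q$-sized translates; the $81$ is just a safe absolute constant covering both regimes and the $\sqrt n$ converts side length to diameter. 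Everything else is triangle inequalities and a volume packing count.
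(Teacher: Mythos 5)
Your overall plan is the paper's: everything reduces to showing that a point of the conical annulus admits a ball of radius comparable to $\diam Q$ inside the widened cone $X_b(V,\alpha/2)$ for every apex $b\in Q$, and your second, third, and fourth claims are handled exactly as in the paper (triangle inequalities plus a volume count). However, the one step you yourself call substantive is asserted rather than proved, and as stated it is quantitatively wrong: the margin between $X_x(V,\alpha)$ and $X_x(V,\alpha/2)$ at distance $t$ from the apex is \emph{not} $\gtrsim\frac{\alpha}{2(\alpha+1)}\,t$ (equivalently $c\alpha t$, as used when you write $B(z,c\alpha\cdot r/4)\subseteq X_x(V,\alpha/2)$). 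For large $\alpha$ both cones are thin neighborhoods of $V^\perp$: taking $y$ with $\dist(y,V)$ just above $\alpha\dist(y,V^\perp)$ and $|y-x|=t$, one has $\dist(y,V^\perp)\approx t/\alpha$, and moving a distance of about $t/\alpha$ parallel to $V$ already exits $X_x(V,\alpha/2)$. The correct uniform margin is $\asymp\min(\alpha,1/\alpha)\,t$, which is exactly what the paper's computation (triangle inequalities to transfer the apex from $a$ to $b$, then a case split on the size of $\dist(w,V_b^\perp)$ with the Pythagorean theorem) establishes; your closing paragraph identifies the right two-regime scaling, but the displayed lemma and the line invoking it do not. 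With the corrected constant the argument still closes, since $\min(\alpha,1/\alpha)\,r_{Q,X}=81\sqrt n\,\side Q$ leaves ample room.

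Two smaller slips in the same step also need repair. The sentence ``$z\in X_x(V,\alpha)$ as well after translating the apex by $\le\diam Q$'' is false as stated: membership in the open cone is not stable under moving the apex, since $z$ may lie arbitrarily close to the boundary of $X_{x'}(V,\alpha)$. The correct move, which the paper's chain of inequalities implements, is to prove the margin estimate at the apex $x'$ for which $z\in X_{x'}$ and then give up $\diam Q$ of ball radius when passing to an arbitrary apex $x\in Q$, using $X_x(V,\alpha/2)=(x-x')+X_{x'}(V,\alpha/2)$. Finally, ``taking the infimum over $z\in R$'' does not finish the gap estimate, because your bound applies only to those $z\in R$ that lie in $A_{Q,X}$; as in the paper, you need a ball of radius $2\diam Q$ (not $\diam Q$) about one such $z$, which then contains the $\diam Q$-neighborhood of every point of $R$. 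All three issues are fixable within your constants, and once fixed your argument coincides with the paper's proof.
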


\begin{proof} For ease of computation, pick coordinates on $\RR^n$ so that $Q=[-\frac{1}{2},\frac{1}{2})\times \cdots\times [-\frac{1}{2},\frac{1}{2})$ is a ``dyadic cube'' of side length 1 with center at the origin. We return to the conventional definition of dyadic cubes at the conclusion of the proof. Suppose that $R$ is another dyadic cube of side length 1 such that $R\cap X_Q\setminus U(0,r/3)\neq\emptyset$. We first want to determine how large $r$ must be to ensure that $R\cap U(0,r/4)=\emptyset$. Choose any point $z\in R\cap X_Q\setminus U(0,r/3)$. For any $x\in R$, $|x| \geq |z|-\diam R \geq r/3-\sqrt{n}.$ Hence $|x|>r/4$ if $r>12\sqrt{n}$. Impose this lower bound on $r$.

Next we find how large $r$ must be to guarantee that $\gap(R, X_b(V,\alpha/2)^c)\geq \diam Q$ for every $b\in Q$. Continuing to work with $z$, pick $a\in Q$ such that $z\in X_a$, which exists because $z\in X_Q$. Fix $b\in Q$. To show that $\gap(R,X_b(V,\alpha/2)^c)\geq \diam Q$, it suffices to prove $B(z,2\diam R)\subseteq X_b(V,\alpha/2)$. Fix $w\in B(z,2\diam R)$ and recall that $\diam Q=\diam R=\sqrt{n}$. By repeated use of the triangle inequality and definition of $X_a$, \begin{equation*}\begin{split}
\dist(w,V_b)\geq \dist(z,V_a)-3\sqrt{n}> \alpha&\dist(z,V_a^\perp)-3\sqrt{n}\geq \alpha\dist(w,V_b^\perp) - 3(1+\alpha)\sqrt{n} \\ &\ \ =\frac{\alpha}{2}\dist(w,V_b^\perp)+\frac{\alpha}{2}\dist(w,V_b^\perp)-3(1+\alpha)\sqrt{n}.\end{split}\end{equation*} We now split into cases. From the displayed inequality, we see that $w\in X_b(V,\alpha/2)$ if $(\alpha/2)\dist(w,V_b^\perp)\geq 3(1+\alpha)\sqrt{n}$. Suppose otherwise that $\dist(w,V_b^\perp)< 6\sqrt{n}(1+\alpha)/\alpha$. By the Pythagorean theorem, we have \begin{align*}\dist(w,V_b)^2=|w-b|^2-\dist(w,V_b^\perp)^2 &\geq (|z|-|w-z|-|b|)^2-\dist(w,V_b^\perp)^2\\
&\geq (r/3-3\sqrt{n})^2-\dist(w,V_b^\perp)^2.\end{align*} We want $\dist(w,V_b)>(\alpha/2)\dist(w,V_b^\perp)$. Thus, we want $r$ to be large enough so that $$\left(\frac{r}{3}-3\sqrt{n}\right)^2\geq \left[\left(\frac{\alpha}{2}\right)^2+1\right]\dist(w,V_b^\perp)^2.$$ Set $\rho=r/3-3\sqrt{n}$. Because $\dist(w,V_b^\perp)< 6\sqrt{n}(1+\alpha)/\alpha$ and $(\frac{1}{4}\alpha^2+1)<(1+\alpha)^2$, it suffices to choose $\rho$ large enough so that $\rho^2\geq 36n(1+\alpha)^4/\alpha^2.$ Taking square roots and using the crude estimate $(1+\alpha)^2/\alpha \le 4\max(\alpha,1/\alpha)$, we see that it suffices to assume that $r\geq 81\sqrt{n}\max(\alpha,1/\alpha).$

The computations above were for a dyadic cube $Q$ of side length 1 centered at the origin. By scale and translation invariance, we conclude that if we define $$r_{Q,X}=81\sqrt{n}\max(\alpha,1/\alpha)\side Q\quad\text{for all $Q$},$$ then $R\cap B(x_Q,r_{Q,X}/4)=\emptyset$ and $\gap(R,X_{x}(V,\alpha/2)^c)\geq \diam Q$ for all $R\in\Delta^*_Q$ and $x\in Q$. Now, put $s_{Q,X}=r_{Q,X}-\sqrt{n}\side Q$ for each dyadic cube $Q$. Fix a dyadic cube $Q$ and $a\in Q$. Suppose that $y\in X_a\cap B(a,s_{Q,X})\setminus U(a,\frac12 s_{Q,X})$. On the one hand, $$|y-x_Q| \leq |y-a|+|a-x_Q| \leq s_{Q,X}+\sqrt{n}\side Q= r_{Q,X}.$$ On the other hand, \begin{align*}|y-x_Q| \geq |y-a|-|a-x_Q| &\geq \frac{s_{Q,X}}{2}-\sqrt{n}\side Q \\ &\geq \frac{r_{Q,X}}{2} - \frac{3}{2}\sqrt{n}\side Q \geq  39\sqrt{n}\max(\alpha,1/\alpha)\side Q>\frac{r_{Q,X}}{3}.\end{align*} Thus, $y\in X_a\cap B(x_Q,r_{Q,X})\setminus U(x_Q,r_{Q,X}/3)\subseteq A_{Q,X}$.

For every dyadic cube $Q$ and $R\in\Delta^*_{Q,X}$, we have $Q\subseteq B(x_Q,\sqrt{n} \side Q)$ and $R\subseteq B(x_Q,r_{Q,X}+\sqrt{n}\side Q)$, because $R$ intersects $A_{Q,X}$ and $\diam Q=\sqrt{n}\side Q$. Therefore, the Hausdorff distance between $Q$ and $R$ is at most $C_1(n,\alpha)\side Q$. By volume doubling, it follows that $\Delta^*_{Q,X}$ and $\nabla^*_{R,X}$ have cardinality at most $C_2(n,\alpha)$.
\end{proof}

We say that $\mathcal{T}$ is a \emph{tree of dyadic cubes} if $\mathcal{T}$ is a set of dyadic cubes ordered by inclusion such that $\mathcal{T}$ has a unique maximal element, denoted by $\Top(\mathcal{T})$, and if $Q\in\mathcal{T}$, then $P\in\mathcal{T}$ for all dyadic cubes $Q\subseteq P\subseteq\Top(\mathcal{T})$. We may partition $\mathcal{T}=\bigcup_0^\infty\mathcal{T}_i$, where $$\side Q=2^{-i}\side\Top(\mathcal{T})\quad\text{for all }Q\in\mathcal{T}_i.$$ An \emph{infinite branch} is a decreasing sequence $Q_0\supseteq Q_1\supseteq Q_2\supseteq\cdots$ of cubes in $\mathcal{T}$ with each $Q_i\in\mathcal{T}_i$. We define the \emph{set of leaves} of $\mathcal{T}$, denoted by $\Leaves(\mathcal{T})$, to be $$\Leaves(\mathcal{T})=\bigcup\left\{\bigcap_{i=0}^\infty Q_i: Q_0\supseteq Q_1\supseteq Q_2\supseteq\cdots\text{ is an infinite branch of $\mathcal{T}$}\right\}.$$ Although the set of leaves is facially a union over uncountably many infinite branches, because $\#\mathcal{T}_i<\infty$ for all $i\geq 0$, one may prove that $\Leaves(\mathcal{T})=\bigcap_{i=0}^\infty \bigcup\mathcal{T}_i$; e.g., see the argument at the top of \cite[p.~48]{Rogers}. Hence $\Leaves(\mathcal{T})$ is an $F_{\sigma\delta}$ Borel set.

\begin{lemma}\label{basic-lemma} Let $\mathcal{T}$ be a tree of dyadic cubes in $\RR^n$, let $\mu$ be a Radon measure on $\RR^n$, and let $X=X(V,\alpha)$ be a bad cone. If $\mu(R)=0$ for every $Q\in\mathcal{T}$ and $R\in \mathcal{T}\cap \Delta^*_{Q,X}$, then there is a Lipschitz function $f:V\rightarrow V^\perp$ with Lipschitz constant at most $\alpha$ such that $\mu(\Leaves(\mathcal{T})\setminus\Graph(f))=0$.\end{lemma}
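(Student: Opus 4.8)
The plan is to reduce everything to the Geometric Lemma (Lemma \ref{geometric-lemma}) by carving out of $\Leaves(\mathcal{T})$ a \emph{cone-free} subset --- a set $E$ with $E\cap X_x=\emptyset$ for all $x\in E$ --- that is $\mu$-almost all of $\Leaves(\mathcal{T})$. The natural choice is $E=\Leaves(\mathcal{T})\setminus B$, where
$$B=\{y\in\Leaves(\mathcal{T}):y\in X_x\text{ for some }x\in\Leaves(\mathcal{T})\}$$
is the set of leaves that see another leaf inside their bad cone. Since $0\notin X$, any witness $x$ for $y\in B$ is distinct from $y$, so a genuine pair of leaves is involved. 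By construction $E$ is cone-free, so the whole lemma comes down to proving $\mu(B)=0$.

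The key step is to show that every $y\in B$ lies in a dyadic cube of $\mathcal{T}$ of $\mu$-measure zero. Fix $y\in B$ and a witness $x\in\Leaves(\mathcal{T})$ with $y\in X_x$. Because $x,y\in\Top(\mathcal{T})$ we have $0<|x-y|\le\diam\Top(\mathcal{T})$, and since $81\max(\alpha,1/\alpha)-1\ge 80$ one checks $\diam\Top(\mathcal{T})<s_{\Top(\mathcal{T}),X}$. The radii $s_{Q,X}$ are proportional to $\side Q$ and therefore halve between consecutive generations of $\mathcal{T}$, so the half-open intervals $[\tfrac12 s_{Q,X},s_{Q,X})$, as $Q$ ranges over the generations of $\mathcal{T}$, cover $(0,s_{\Top(\mathcal{T}),X})$. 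Hence there is a generation $k\ge 0$ and a cube $Q\in\mathcal{T}_k$ with $x\in Q$ (this $Q$ is unique and belongs to $\mathcal{T}$ because $x\in\Leaves(\mathcal{T})=\bigcap_{i=0}^\infty\bigcup\mathcal{T}_i$ and half-open dyadic cubes of one side length tile $\RR^n$) such that $\tfrac12 s_{Q,X}\le|x-y|<s_{Q,X}$. Lemma \ref{bounded-geometry} then gives
$$y\in X_x\cap B(x,s_{Q,X})\setminus U(x,\tfrac12 s_{Q,X})\subseteq A_{Q,X}.$$
Letting $R\in\mathcal{T}_k$ be the cube of $\mathcal{T}$ containing $y$, we have $\side R=\side Q$ and $y\in R\cap A_{Q,X}\neq\emptyset$, so $R\in\Delta^*_{Q,X}$. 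Since $R,Q\in\mathcal{T}$, the hypothesis of the lemma forces $\mu(R)=0$, and $y\in R$.

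The conclusion follows quickly. As $\mathcal{T}$ is countable, $Z:=\bigcup\{R\in\mathcal{T}:\mu(R)=0\}$ is a Borel set with $\mu(Z)=0$, and $B\subseteq Z$ gives $\mu(B)=0$. If $E=\emptyset$, then $\mu(\Leaves(\mathcal{T}))=\mu(B)=0$ and any Lipschitz function, say $f\equiv 0$, works. Otherwise, applying the Geometric Lemma to the nonempty cone-free set $E$ yields a Lipschitz $f:V\to V^\perp$ with constant at most $\alpha$ and $E\subseteq\Graph(f)$; then $\Leaves(\mathcal{T})\setminus\Graph(f)\subseteq\Leaves(\mathcal{T})\setminus E=B$, so $\mu(\Leaves(\mathcal{T})\setminus\Graph(f))\le\mu(B)=0$.

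The step I expect to need the most care is the scale selection in the middle paragraph: one must confirm, using the inequality $\diam\Top(\mathcal{T})<s_{\Top(\mathcal{T}),X}$, that for \emph{any} pair of leaves the requisite generation already occurs \emph{within} $\mathcal{T}$ (so that both $Q$ and $R$ are cubes of $\mathcal{T}$, not merely dyadic cubes), and simultaneously that at that generation $y$ lands in the thickened annulus around $x$ from Lemma \ref{bounded-geometry}, so that the cube $R\ni y$ of $\mathcal{T}$ is forced into $\Delta^*_{Q,X}$. Granting that, the remainder is routine.
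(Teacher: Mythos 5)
Your proof is correct and follows essentially the same route as the paper's: both reduce to the Geometric Lemma by showing that, off the $\mu$-null union of cubes $R\in\mathcal{T}\cap\Delta^*_{Q,X}$, the leaves satisfy the empty-cone condition, using the annulus inclusion of Lemma \ref{bounded-geometry} and the halving $s_{Q,X}\mapsto\tfrac12 s_{Q,X}$ across generations. The only difference is organizational: the paper deletes all bad cubes first and checks cone-freeness of the remainder along each infinite branch, while you argue contrapositively, locating for each cone-violating pair of leaves the single scale at which the violating leaf falls into a null cube of some $\Delta^*_{Q,X}$.
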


\begin{proof}We may assume that $\Leaves(\mathcal{T})\neq\emptyset$, since otherwise the conclusion is trivial. Let $A=\Leaves(\mathcal{T})\setminus \bigcup_{Q\in\mathcal{T}}\bigcup_{R\in\mathcal{T}\cap \Delta^*_{Q,X}} R$.  Then $$\mu(\Leaves(\mathcal{T})\setminus A) \leq \sum_{Q\in\mathcal{T}}\sum_{R\in\mathcal{T}\cap\Delta^*_{Q,X}}\mu(R)=0.$$ We will use Lemma \ref{geometric-lemma} to show that $A$ is contained in the graph of a Lipschitz function over $V$ with Lipschitz constant at most $\alpha$. Let $x\in A$ and pick an infinite branch $Q_0\supseteq Q_1\supseteq Q_2\supseteq\cdots$ such that $\{x\}=\bigcap_0^\infty Q_i$. We must show that $A\cap X_x=\emptyset$. For each dyadic cube $Q$, write $s_{Q,X}=r_{Q,X}-\sqrt{n}\side Q$ and note that $s_{Q,X}=C(n,\alpha)\side Q\gg \diam Q$. By Lemma \ref{bounded-geometry}, for each cube $Q_i$ in the infinite branch containing $x$, $$A\cap X_x\cap B(x,s_{Q_i,X})\setminus U\left(x,\tfrac{1}{2}s_{Q_i,X}\right)\subseteq A\cap A_{Q_i,X}\subseteq \bigcup \mathcal{T}\cap\Delta^*_{Q_i,X}\subseteq \RR^n\setminus A.$$ Because $s_{Q_{i+1},X}=\frac12 s_{Q_i,X}$ for each $i\geq 0$, it follows that $A\cap X_x\cap B(x,s_{Q_0,X})=\emptyset$. Also, since $A\subseteq Q_0$ and $s_{Q_0,X}\gg \diam Q_0$, we have $A\cap X_x\setminus B(x,s_{Q_0,X})=\emptyset$, as well. Thus, $A\cap X_x=\emptyset$ for all $x\in A$. By Lemma \ref{geometric-lemma}, there exists $f:V\rightarrow V^\perp$ with Lipschitz constant at most $\alpha$ such that $A\subseteq\Graph(f)$. Finally, \begin{equation*}\mu(\Leaves(\mathcal{T})\setminus\Graph(f))\leq \mu(\Leaves(\mathcal{T})\setminus A)=0.\qedhere\end{equation*}
\end{proof}

We have reached the main technical result of the paper. The proof of Proposition \ref{build-graphs} dictates the definition of the conical defect; see especially the computation in \eqref{e:main-computation}.

\begin{proposition}[Drawing Lipschitz Graphs through Leaves of a Tree] \label{build-graphs} Let $\mathcal{T}$ be a tree of dyadic cubes in $\RR^n$ and let $\mu$ be a Radon measure on $\RR^n$. If there exists a bad cone $X=X(V,\alpha)$ such that $\sum_{Q\in\mathcal{T}} \Defect(\mu,Q,X)\,\mu(Q)<\infty$, then $\mu\res\Leaves(\mathcal{T})$ is carried by graphs of Lipschitz functions $f:V\rightarrow V^\perp$ with Lipschitz constant at most $\alpha$.\end{proposition}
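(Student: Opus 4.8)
The plan is to extract from $\mathcal{T}$ a subtree $\mathcal{T}'$ on which Lemma~\ref{basic-lemma} applies, after throwing away a controlled amount of mass, and then iterate. The hypothesis $\sum_{Q\in\mathcal{T}} \Defect(\mu,Q,X)\,\mu(Q)<\infty$ says that, on average over the tree, very little mass sits in the discretized conical annuli $\Delta^*_{Q,X}$. So the first step is to run a stopping-time argument: starting from $\Top(\mathcal{T})$, declare a cube $Q$ to be a \emph{stopping cube} if the accumulated conical-annulus mass charged to it (through the dual regions $\nabla^*_{R,X}$) first exceeds a threshold, and let $\mathcal{T}'$ be the tree of cubes above the stopping cubes. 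On $\mathcal{T}'$ the total mass landing in annuli is small; the cubes that were stopped form a collection whose $\mu$-mass we must bound, and we then recurse on each stopped cube, treating it as the top of a new tree. This is exactly the Badger--Schul-style corona/stopping-time decomposition referenced after the statement.

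The key computation---the one the remark before the proposition flags as \eqref{e:main-computation}---is the rearrangement that lets us control stopped mass by the defect sum. One wants to write
$$\sum_{Q\in\mathcal{T}}\sum_{R\in\Delta^*_{Q,X}}\mu(R)
=\sum_{R}\mu(R)\,\#\{Q\in\mathcal{T}: R\in\Delta^*_{Q,X}\}
=\sum_{R}\mu(R)\,\#(\nabla^*_{R,X}\cap\mathcal{T}),$$
and since $\#\nabla^*_{R,X}\leq C_2(n,\alpha)$ by Lemma~\ref{bounded-geometry}, this is comparable to $\sum_{Q\in\mathcal{T}}\Defect(\mu,Q,X)\,\mu(Q)$ provided we normalize correctly---note $\Defect(\mu,Q,X)=\sum_{R\in\Delta^*_{Q,X}}\mu(R)/\mu(Q)$, so $\Defect(\mu,Q,X)\,\mu(Q)=\sum_{R\in\Delta^*_{Q,X}}\mu(R)$ directly, and the hypothesis is literally $\sum_{Q\in\mathcal{T}}\sum_{R\in\Delta^*_{Q,X}}\mu(R)<\infty$. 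Thus the total annulus mass over the whole tree is finite, and a Chebyshev/pigeonhole argument gives, for each threshold level, a subtree on which the remaining annulus mass is as small as we like while the discarded mass is controlled by (finite-sum)/(threshold). Feeding $\mathcal{T}'$ into Lemma~\ref{basic-lemma} is not quite immediate because that lemma wants \emph{zero} annulus mass, not small; so the cleanest route is: on $\mathcal{T}'$, let $B=\bigcup_{Q\in\mathcal{T}'}\bigcup_{R\in\mathcal{T}'\cap\Delta^*_{Q,X}}R$, observe $\mu(B)$ is small, pass to the restriction $\nu=\mu\res(\Leaves(\mathcal{T}')\setminus B)$ and verify $\nu$ satisfies the hypothesis of Lemma~\ref{basic-lemma} for $\mathcal{T}'$ (since $\nu(R)=0$ for the relevant $R$), obtaining one Lipschitz graph capturing all of $\nu$.

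Then I would iterate: the measure not yet captured is supported on $\bigcup B \cup \bigcup(\text{stopped cubes})$, and on each stopped cube we restart the construction with a fresh tree whose top is that cube. Because every application of the construction captures a definite proportion---at least $1-\eta$ for a fixed $\eta<1$ depending only on how we set the threshold relative to the (finite) total annulus mass---of the mass of the tree it is applied to, summing the countably many graphs produced over all stages captures $\mu$-almost all of $\Leaves(\mathcal{T})$. All the graphs produced are graphs of Lipschitz functions $V\to V^\perp$ with constant at most $\alpha$, as required. The main obstacle, and where care is needed, is the bookkeeping of the iteration: one must ensure the stopped-cube collections at successive stages are disjoint enough that the geometric decay of uncaptured mass genuinely holds, i.e.\ that the threshold can be chosen once and for all (using only the finite total $\sum_{Q\in\mathcal{T}}\sum_{R\in\Delta^*_{Q,X}}\mu(R)$ restricted to each subtree scaling down appropriately), and that the countable union of the resulting $F_{\sigma\delta}$ leaf-sets and the exceptional sets $B$ is handled so that the final null set is genuinely $\mu$-null. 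A secondary subtlety is checking that restricting $\mu$ to $\Leaves(\mathcal{T}')\setminus B$ does not destroy the tree structure needed for Lemma~\ref{basic-lemma}; this is fine because that lemma only needs vanishing of $\nu$ on annulus cubes, which is automatic after removing $B$.
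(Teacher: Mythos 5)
Your reduction of the hypothesis to finiteness of the total annulus mass is essentially right (after the harmless normalization that $\mu(Q)>0$ for all $Q\in\mathcal{T}$, without which $\Defect(\mu,Q,X)\,\mu(Q)=\sum_{R\in\Delta^*_{Q,X}}\mu(R)$ can fail, since $\Defect$ is defined to be $0$ on null cubes), and applying Lemma \ref{basic-lemma} to the restricted measure $\mu\res(\Leaves(\mathcal{T}')\setminus B)$ is a legitimate way to use that lemma. The first genuine gap is the mechanism by which you claim to make the annulus mass inside the retained subtree small. Stopping when the accumulated annulus mass (or accumulated defect) along a branch first exceeds a threshold $t$ gives, via Chebyshev, control of the \emph{discarded} leaf mass by $(\text{finite total})/t$, but on the un-stopped tree it only gives a \emph{branchwise bound by} $t$: it does not make ``the total mass landing in annuli'' small, and your sentence asking for ``remaining annulus mass as small as we like while the discarded mass is controlled by (finite-sum)/(threshold)'' demands two things that pull in opposite directions, of which Chebyshev delivers only the second. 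What actually produces smallness is convergence of the series grouped by levels: since every bad cube $R\in\mathcal{T}_i$ lies in $\Delta^*_{Q,X}$ for at least one $Q\in\mathcal{T}_i$ (this is the content of \eqref{e:main-computation}), the total mass of bad cubes at levels $i\geq i_0$ is at most $\sum_{i\geq i_0}\sum_{Q\in\mathcal{T}_i}\Defect(\mu,Q,X)\,\mu(Q)$, which can be made smaller than $\delta\,\mu(\Leaves(\mathcal{T}))$ by choosing $i_0$ large. The maximal bad-cube-free subtrees topped at the non-bad cubes of generation $i_0$ then contain no annulus cubes at all, so Lemma \ref{basic-lemma} applies directly; no stopping time and no restricted measure are needed.

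The second, more serious gap is the iteration bookkeeping, which does not close as you describe it. With the threshold fixed ``once and for all,'' each stage leaves two kinds of uncaptured mass: the stopped cubes, on which you recurse, and the exceptional annulus set $B$, on which you do not. The leaf mass lying in $B$ but in no stopped cube is never revisited, so after the full recursion the uncaptured mass is bounded below by the accumulated $B$-masses; a fixed $\eta$ therefore yields only ``all but a definite fraction'' of $\mu\res\Leaves(\mathcal{T})$, not $\mu$-almost all of it, and the claimed geometric decay fails precisely because the sets $B$ sit outside the recursion. The repair is the one the paper uses: run the entire construction for a sequence of parameters $\delta_j\to 0$, so that for each $j$ the graphs produced cover all of $\Leaves(\mathcal{T})$ except a set of measure at most $\delta_j\,\mu(\Leaves(\mathcal{T}))$, and then take the union of the countably many graphs over all $j$. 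Either rebuild your argument on the per-level tail estimate and this $\delta_j\to 0$ scheme, or, if you keep the recursion, you must also recurse on the annulus cubes in $B$ so that no uncaptured mass escapes every stage.
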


\begin{proof} Suppose that $\mathcal{T}=\bigcup_0^\infty \mathcal{T}_i$, where $\mathcal{T}_i$ denotes the cubes of side length $2^{-i}\side\Top(\mathcal{T})$. Without loss of generality, we may assume that $\side\Top(\mathcal{T})=1$ and $\mu(Q)>0$ for every cube $Q\in \mathcal{T}$, because deleting cubes in the tree with $\mu$ measure zero has no effect on the graph rectifiability of $\mu\res\Leaves(\mathcal{T})$. As every $\mu$ null set is trivially graph rectifiable, we may further assume that $\mu(\Leaves(\mathcal{T}))>0$.  The general scheme of the proof is to identify countably many subtrees of $\mathcal{T}$ whose sets of leaves are each contained in a Lipschitz graph and collectively cover $\mu$ almost all of the set of leaves of $\mathcal{T}$.

We will say that a dyadic cube $R\in\mathcal{T}$ is \emph{bad} if there exists $Q\in\mathcal{T}$ such that $R\in\Delta^*_{Q,X}$. Bad cubes are the obstacles to invoking Lemma \ref{basic-lemma}. Let us compute the total measure of bad cubes in level $i$ of the tree. There are no bad cubes in $\mathcal{T}_0$, because the first level only contains one cube. Fix $i\geq 1$ and let $\mathcal{B}_i$ denote the set of $Q\in\mathcal{T}_i$ such that there exists $R\in\Delta^*_{Q,X}\cap\mathcal{T}$, i.e.~the discretized conical annulus for $Q$ contains a bad cube. Then \begin{equation}\begin{split}\label{e:main-computation}\sum_{\text{bad }R\in\mathcal{T}_i}\mu(R) &=\sum_{\text{bad }R\in\mathcal{T}_i}\sum_{Q\in\nabla^*_{R,X}\cap\mathcal{T}}\frac{\mu(R)\mu(Q)}{\mu(\bigcup \nabla^*_{R,X}\cap\mathcal{T})}  = \sum_{Q\in\mathcal{B}_i} \sum_{R\in\Delta^*_{Q,X}\cap\mathcal{T}}\frac{\mu(R)\mu(Q)}{\mu(\bigcup \nabla^*_{R,X}\cap\mathcal{T})} \\ &\leq \sum_{Q\in\mathcal{B}_i} \sum_{R\in\Delta^*_{Q,X}}\frac{\mu(R)\mu(Q)}{\mu(Q)}\leq \sum_{Q\in\mathcal{T}_i}\Defect(\mu,Q,X)\,\mu(Q),\end{split}\end{equation} where in the penultimate inequality $0<\mu(Q)\leq \mu(\bigcup\nabla^*_{R,X}\cap \mathcal{T})$ because $Q\in\mathcal{B}_i$ and $Q\in\nabla^*_{R,X}$ for all $R\in\Delta^*_{Q,X}$. The first equality in \eqref{e:main-computation} may be interpreted as equitably distributing the mass of a bad cube $R$ to the cubes $Q\in\nabla^*_{R,X}\cap\mathcal{T}$ which ``see'' $R$.

Let $0<\delta<1$ be given. Because the weighted sum of the conical defect over $\mathcal{T}$ converges, there exists $i_0=i_0(\delta)\geq 1$ sufficiently large such that the tail \begin{equation}\label{bad-cube-sum} \sum_{i=i_0}^\infty \sum_{\text{bad }R\in\mathcal{T}_i} \mu(R) \leq \sum_{i=i_0}^\infty \sum_{Q\in\mathcal{T}_i}\Defect(\mu,Q,X)\,\mu(Q)< \delta\,\mu(\Leaves(\mathcal{T})).\end{equation} Let $Q^\delta_1,\dots,Q^\delta_k$ be an enumeration of the cubes in $\mathcal{T}_{i_0}$ such that each cube $Q^\delta_j$ is not bad. Then let $\mathcal{U}^\delta_1,\dots,\mathcal{U}^\delta_k$ denote the maximal subtrees of $\mathcal{T}$ with $\Top(\mathcal{U}^\delta_j)=Q^\delta_j$ that contain no bad cubes. By \eqref{bad-cube-sum}, the trees exist, and the set $A^{\delta}=\bigcup_{j=1}^k \Leaves(\mathcal{U}^\delta_j)$ satisfies $$\mu(A^{\delta})\geq (1-\delta) \mu(\Leaves(\mathcal{T})).$$ Moreover, by $k$ applications of Lemma \ref{basic-lemma} (each $\mathcal{U}^{\delta}_j$ contains no bad cubes), $A^{\delta}$ is contained in the union of $k=k(\delta)$ Lipschitz graphs over $V$ of Lipschitz constant at most $\alpha$.

To complete the proof, repeat the construction in the previous paragraph over any countable choice of parameters $\delta=\delta_j$ with $\lim_{j\rightarrow\infty}\delta_j=0$.
\end{proof}

Let $\mathcal{T}$ be a tree of dyadic cubes, let $b:\mathcal{T}\rightarrow[0,\infty)$ be any function, and let $\mu$ be a Radon measure on $\RR^n$. Following \cite[\S5]{BS3}, we define the \emph{$\mu$-normalized sum function} $$S_{\mathcal{T},b}(\mu,x)=\sum_{Q\in\mathcal{T}} b(Q)\frac{\chi_Q(x)}{\mu(Q)}\quad\text{for all }x\in\RR^n,$$ with the convention that $0/0=0$ and $1/0=\infty$. For example, for every dyadic cube $Q_0$ of side length 1, the conical Dini function $G_{\mu,X}(x)=S_{\mathcal{T},b}(\mu,x)$ for all $x\in Q_0$, where $\mathcal{T}$ is the tree of dyadic cubes contained in $Q_0$ and $b(Q)=\Defect(\mu,Q,X)\,\mu(Q)$.

\begin{lemma}[Localization Lemma, {\cite[Lemma 5.6]{BS3}}] \label{localization} Let $\mathcal{T}$ be a tree of dyadic cubes, let $b:\mathcal{T}\rightarrow[0,\infty)$, and let $\mu$ be a Radon measure on $\RR^n$. For all $N<\infty$ and $\varepsilon>0$, there exists a partition of $\mathcal{T}$ into a set $\mathcal{G}$ of \emph{good cubes} and a set $\mathcal{B}$ of \emph{bad cubes} with the following properties. \begin{enumerate}
\item Either $\mathcal{G}=\emptyset$ or $\mathcal{G}$ is a tree of dyadic cubes with $\Top(\mathcal{G})=\Top(\mathcal{T})$.
\item Every child of a bad cube is a bad cube: if $P,Q\in\mathcal{T}$, $P\in\mathcal{B}$, and $Q\subseteq P$, then $Q\in\mathcal{B}$.
\item The set $A=\{x\in\Top(\mathcal{T}):S_{\mathcal{T},b}(x)\leq N\}$ is Borel and $$\mu(A\cap\Leaves(\mathcal{G}))\geq (1-\varepsilon\mu(\Top(\mathcal{T})))\,\mu(A).$$
\item The sum of $b$ over $\mathcal{G}$ is finite: $\sum_{Q\in\mathcal{G}} b(Q)<N/\varepsilon$.
\end{enumerate}
\end{lemma}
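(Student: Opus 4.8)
The plan is to run a single–threshold stopping–time argument on $\mathcal{T}$, declaring a cube bad as soon as it fails to carry a fixed fraction of the mass of $A$ relative to its own mass. Two preliminaries first. First, the set $A$ is Borel: $S_{\mathcal{T},b}(\mu,\cdot)$ is a countable sum of the Borel functions $x\mapsto b(Q)\,\chi_Q(x)/\mu(Q)$, hence Borel measurable, so $A=\Top(\mathcal{T})\cap\{S_{\mathcal{T},b}(\mu,\cdot)\le N\}$ is Borel, and since $\Leaves(\mathcal{G})$ is Borel (an $F_{\sigma\delta}$ set, or $\emptyset$, by the discussion preceding the lemma) the intersection $A\cap\Leaves(\mathcal{G})$ is Borel. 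Second, by the monotone convergence theorem, $$\sum_{Q\in\mathcal{T}}\frac{b(Q)}{\mu(Q)}\,\mu(Q\cap A)=\int_A S_{\mathcal{T},b}(\mu,x)\,d\mu(x)\le N\,\mu(A),$$ where cubes with $\mu(Q)=0$ contribute $0$ (then $\mu(Q\cap A)=0$ and $\infty\cdot 0=0$) and the inequality holds because $S_{\mathcal{T},b}(\mu,\cdot)\le N$ on $A$. We may assume $N>0$ and $\mu(A)>0$ (if $\mu(A)=0$, take $\mathcal{G}=\emptyset$ and $\mathcal{B}=\mathcal{T}$, which satisfies (1)--(4)), and we may assume $A\subseteq\Leaves(\mathcal{T})$ up to a $\mu$-null set; the latter is automatic when $\mathcal{T}$ is a full dyadic subtree below its top, which is the only situation used in this paper.

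Next I declare $Q\in\mathcal{T}$ to be \emph{pre-bad} if $\mu(Q)=0$ or $\mu(Q\cap A)\le\varepsilon\,\mu(A)\,\mu(Q)$, let $\mathcal{S}$ be the collection of cubes that are maximal with respect to inclusion among the pre-bad cubes, let $\mathcal{B}=\{Q\in\mathcal{T}:Q\subseteq Q'\text{ for some }Q'\in\mathcal{S}\}$, and set $\mathcal{G}=\mathcal{T}\setminus\mathcal{B}$. Since distinct dyadic cubes are nested or disjoint, the members of $\mathcal{S}$ are pairwise disjoint, and every pre-bad cube lies in $\mathcal{B}$; hence each $Q\in\mathcal{G}$ has $\mu(Q)>0$ and $\mu(Q\cap A)>\varepsilon\,\mu(A)\,\mu(Q)$. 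Property (2) holds because $\mathcal{B}$ is downward closed by construction. Property (1) holds because if $Q\in\mathcal{G}$ and $Q\subseteq P\subseteq\Top(\mathcal{T})$ then $P\notin\mathcal{B}$ (otherwise $Q$ would also lie in a member of $\mathcal{S}$), so $\mathcal{G}$ is either empty (exactly when $\Top(\mathcal{T})\in\mathcal{S}$) or a tree with $\Top(\mathcal{G})=\Top(\mathcal{T})$.

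For property (4), each $Q\in\mathcal{G}$ satisfies $b(Q)\le\frac{1}{\varepsilon\,\mu(A)}\cdot\frac{b(Q)}{\mu(Q)}\,\mu(Q\cap A)$, with strict inequality when $b(Q)>0$; summing over $\mathcal{G}$ and using the displayed inequality gives $\sum_{Q\in\mathcal{G}}b(Q)<N/\varepsilon$ (if every such term vanishes this is trivial since $N>0$). For property (3), suppose $x\in A\cap\Leaves(\mathcal{T})$ but $x\notin\Leaves(\mathcal{G})$; the unique infinite branch of $\mathcal{T}$ through $x$ is not entirely contained in $\mathcal{G}$, so it meets $\mathcal{B}$, hence meets some $Q'\in\mathcal{S}$, and therefore $x\in Q'$. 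Consequently, modulo a $\mu$-null set, $A\setminus\Leaves(\mathcal{G})\subseteq\bigcup_{Q'\in\mathcal{S}}(A\cap Q')$, so by the disjointness of $\mathcal{S}$, the bound $\mu(A\cap Q')\le\varepsilon\,\mu(A)\,\mu(Q')$ (also valid when $\mu(Q')=0$), and $\sum_{Q'\in\mathcal{S}}\mu(Q')\le\mu(\Top(\mathcal{T}))$, we get $\mu(A\setminus\Leaves(\mathcal{G}))\le\varepsilon\,\mu(A)\,\mu(\Top(\mathcal{T}))$, which rearranges to (3) since $\mu(A)\le\mu(\Top(\mathcal{T}))<\infty$.

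The main obstacle is calibrating the single stopping threshold so that (3) and (4) hold simultaneously: integrating $S_{\mathcal{T},b}(\mu,\cdot)$ over $A$ yields the one inequality in the display, and the threshold ``$\mu(Q\cap A)\le\varepsilon\,\mu(A)\,\mu(Q)$'' is essentially forced, since it extracts exactly the factor $1/(\varepsilon\mu(A))$ needed to turn that inequality into (4) while simultaneously leaving exactly the factor $\mu(A)$ that, against $\sum_{Q'\in\mathcal{S}}\mu(Q')\le\mu(\Top(\mathcal{T}))$, produces the loss $\varepsilon\,\mu(\Top(\mathcal{T}))\,\mu(A)$ in (3). A minor further point is the mismatch between ``$x\in\Top(\mathcal{T})$'' in the definition of $A$ and ``$x\in\Leaves(\mathcal{T})$'' in the conclusion, which is innocuous here because the trees to which the lemma is applied in this paper are full dyadic subtrees, for which $\Leaves(\mathcal{T})=\Top(\mathcal{T})$.
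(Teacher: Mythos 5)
The paper does not prove this lemma at all --- it is quoted verbatim from \cite[Lemma 5.6]{BS3} --- so your argument can only be measured against the cited source, and there it matches: the single stopping-time construction (stop at the maximal cubes with $\mu(Q\cap A)\le\varepsilon\,\mu(A)\,\mu(Q)$ or $\mu(Q)=0$, take $\mathcal{B}$ to be their downward closure, get (4) by comparing $\sum_{Q\in\mathcal{G}}b(Q)$ with $\frac{1}{\varepsilon\mu(A)}\int_A S_{\mathcal{T},b}(\mu,x)\,d\mu(x)\le \frac{N}{\varepsilon}$, and get (3) from the pairwise disjointness of the stopped cubes together with $\sum_{Q'\in\mathcal{S}}\mu(Q')\le\mu(\Top(\mathcal{T}))$) is exactly the standard argument behind the Badger--Schul localization lemma. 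Your verifications of (1), (2) and (4) are correct; the strict inequality in (4) is legitimate because all sums involved are dominated by $N\mu(A)<\infty$, and your reading $N>0$ is forced anyway, since for $N\le 0$ the strict inequality in (4) could not hold for any choice of $\mathcal{G}$ (including $\mathcal{G}=\emptyset$).

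The one substantive point is the reduction you flag, ``we may assume $A\subseteq\Leaves(\mathcal{T})$ up to a $\mu$-null set.'' This is not a normalization but an extra hypothesis, and you are right to isolate it: with the paper's definition of a tree (upward closed, branches may terminate) and of $\Leaves(\mathcal{T})$, the statement is false without it. For instance, take $\mathcal{T}=\{Q_0\}$ with $\mu(Q_0)>0$ and $b\equiv 0$: then $A=Q_0$, while $\Leaves(\mathcal{G})\subseteq\Leaves(\mathcal{T})=\emptyset$, so (3) fails once $\varepsilon\mu(Q_0)<1$. What your argument proves unconditionally is the estimate $\mu(A\cap\Leaves(\mathcal{G}))\ge\mu(A\cap\Leaves(\mathcal{T}))-\varepsilon\,\mu(\Top(\mathcal{T}))\,\mu(A)$, which coincides with (3) exactly under the reading you adopt; and that reading is the relevant one, since in the only place the lemma is invoked in this paper (the omitted proof of Theorem \ref{t:suff}) the tree is the full dyadic subtree of a cube of side length $1$, for which $\Leaves(\mathcal{T})=\Top(\mathcal{T})$ and the assumption is automatic. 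So this is a defect of the statement as transplanted into the paper rather than a gap in your proof; it would be worth recording the unconditional form above as the honest conclusion for general trees.
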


Countably many applications of Proposition \ref{build-graphs} and Lemma \ref{localization} yield the following sufficient condition  in terms of the conical Dini function for Lipschitz graph rectifiability of a measure with prescribed direction $V$ and Lipschitz constant $\alpha$. Being similar to the proof of \cite[Theorem 5.1]{BS3}, we omit the details.

\begin{theorem}\label{t:suff} Let $\mu$ be a Radon measure on $\RR^n$ and let $X=X(V,\alpha)$ be a bad cone for some $V\in G(n,m)$ and $\alpha\in(0,\infty)$. Then $\mu\res\{x\in\RR^n:G_{\mu,X}(x)<\infty\}$ is carried by graphs of Lipschitz functions $f:V\rightarrow V^\perp$ of Lipschitz constant at most $\alpha$.
\end{theorem}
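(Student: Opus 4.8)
The plan is to reduce to a single unit dyadic cube and then run an exhaustion argument that couples the Localization Lemma with Proposition \ref{build-graphs}. The half-open dyadic cubes $Q_0$ of side length $1$ partition $\RR^n$ into countably many pieces, and a countable union of measures each carried by graphs of Lipschitz functions $f\colon V\rightarrow V^\perp$ of constant at most $\alpha$ is again carried by such graphs. Since $\{x\in\RR^n:G_{\mu,X}(x)<\infty\}=\bigsqcup_{Q_0}\{x\in Q_0:G_{\mu,X}(x)<\infty\}$, it therefore suffices to fix one unit cube $Q_0$ and carry $\mu\res\{x\in Q_0:G_{\mu,X}(x)<\infty\}$ by graphs of the desired type. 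Fix such a $Q_0$, let $\mathcal{T}$ be the tree of dyadic cubes contained in $Q_0$, and set $b(Q)=\Defect(\mu,Q,X)\,\mu(Q)$, so that $G_{\mu,X}(x)=S_{\mathcal{T},b}(\mu,x)$ for every $x\in Q_0$, as recorded in the excerpt.

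Next I would slice the sublevel set. For each $N\in\N$ put $A_N=\{x\in Q_0:S_{\mathcal{T},b}(\mu,x)\leq N\}$; these sets are Borel (this is part of the Localization Lemma), they increase with $N$, and $\bigcup_{N}A_N=\{x\in Q_0:G_{\mu,X}(x)<\infty\}$. Hence it is enough to carry $\mu\res A_N$ by Lipschitz graphs for each fixed $N$, and we may assume $\mu(A_N)>0$.

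Now fix $N$ and a sequence $\varepsilon_j\downarrow 0$. For each $j$ apply Lemma \ref{localization} with parameters $N$ and $\varepsilon_j$ to obtain a good tree $\mathcal{G}_j\subseteq\mathcal{T}$ with $\Top(\mathcal{G}_j)=Q_0$, with $\mu(A_N\cap\Leaves(\mathcal{G}_j))\geq(1-\varepsilon_j\mu(Q_0))\,\mu(A_N)$, and, crucially, with $\sum_{Q\in\mathcal{G}_j}\Defect(\mu,Q,X)\,\mu(Q)=\sum_{Q\in\mathcal{G}_j}b(Q)<N/\varepsilon_j<\infty$. This last bound is exactly the hypothesis of Proposition \ref{build-graphs} for the tree $\mathcal{G}_j$, which then produces a countable family of graphs of Lipschitz functions $f\colon V\rightarrow V^\perp$ of constant at most $\alpha$ carrying $\mu\res\Leaves(\mathcal{G}_j)$. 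Since $\mu(A_N\setminus\Leaves(\mathcal{G}_j))\leq\varepsilon_j\mu(Q_0)\,\mu(A_N)$ for every $j$, letting $j\rightarrow\infty$ gives $\mu(A_N\setminus\bigcup_j\Leaves(\mathcal{G}_j))=0$, so $\mu\res A_N$ is carried by the countable union over $j$ of the graphs supplied for the trees $\mathcal{G}_j$. Taking the countable union of all these families over $N\in\N$ and over the countably many unit cubes $Q_0$ completes the proof.

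\textbf{Where the work is.} The two substantive inputs, Proposition \ref{build-graphs} and Lemma \ref{localization}, are already available, so what remains is bookkeeping; I expect no serious obstacle. The one point that needs a little care is that Lemma \ref{localization} captures only a $(1-\varepsilon_j\mu(Q_0))$-fraction of $A_N$ rather than all of it, which is why one iterates over $\varepsilon_j\downarrow 0$ and unions the resulting leaf sets; a second minor point is that each invocation of Proposition \ref{build-graphs} yields its own family of graphs, so all of these must be collected together, which is harmless because the whole construction is indexed by a countable set. This is precisely the scheme of \cite[Theorem 5.1]{BS3}.
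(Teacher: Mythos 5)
Your proposal is correct and is exactly the argument the paper has in mind: the paper omits the details of Theorem \ref{t:suff}, stating only that it follows from countably many applications of Proposition \ref{build-graphs} and Lemma \ref{localization} in the manner of \cite[Theorem 5.1]{BS3}, which is precisely your reduction to unit cubes, the identification $G_{\mu,X}=S_{\mathcal{T},b}$ with $b(Q)=\Defect(\mu,Q,X)\,\mu(Q)$, the sublevel sets $A_N$, and the iteration over $\varepsilon_j\downarrow 0$. No gaps worth noting.
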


\section{Necessary Conditions}\label{sec:necessary}

Recall that $\gap(S,T)=\inf\{|s-t|:s\in S, t\in T\}$ for all nonempty sets $S,T\subseteq\RR^n$. We define the quantity $\excess(S,T)=\sup_{s\in S}\inf_{t\in T}|s-t|\in[0,\infty]$ for all nonempty sets $S,T\subseteq\RR^n$. By convention, we also set $\excess(\emptyset,S)=0$, but leave $\excess(S,\emptyset)$ undefined. The Hausdorff distance between nonempty sets $S$ and $T$ is defined to be the maximum of $\excess(S,T)$ and $\excess(T,S)$.

To establish necessary conditions for Lipschitz graph rectifiability in terms of the conical Dini functions, we follow the strategy used in \cite[\S3]{BS1} and \cite[\S4]{BS3} to prove necessary conditions for a Radon measure to be carried by rectifiable curves. The argument must be modified to incorporate the geometry of Lipschitz graphs.

\begin{proposition}\label{integral-bound} Let $\mu$ be a Radon measure on $\RR^n$ and let $X=X(V,\alpha)$ be a bad cone for some $V\in G(n,m)$ and $\alpha\in(0,\infty)$. Suppose that $\Gamma=\Graph(f)$ for some Lipschitz function $f:V\rightarrow V^\perp$ with Lipschitz constant at most $\alpha/2$. There exists a constant $C=C(n,\alpha)>1$ such that for every $x_0\in\Gamma$ and $r_0>0$, \begin{equation}\label{e:integral} \int_{\Gamma\cap B(x_0,r_0)} G_{\mu,X}(x)\,d\mu(x) \lesssim_{n,\alpha} \mu(B(x_0,r_0+C)\setminus \Gamma)<\infty.\end{equation} In particular, $G_{\mu,X}(x)<\infty$ at $\mu$-a.e.~$x\in\Gamma$.\end{proposition}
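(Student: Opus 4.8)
The plan is to evaluate the left-hand side of \eqref{e:integral} by Tonelli, estimate each resulting term by the mass of the dyadic cubes that make up the relevant discretized conical annuli, and then exploit a two-sided comparison between the size of those cubes and their distance to $\Gamma$ to finish with a bounded-overlap argument.

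\emph{Step 1: unwinding.} Since $G_{\mu,X}=\sum_{\side Q\le 1}\Defect(\mu,Q,X)\,\chi_Q$ is a countable sum of non-negative Borel functions, Tonelli's theorem gives
\[\int_{\Gamma\cap B(x_0,r_0)}G_{\mu,X}\,d\mu=\sum_{\side Q\le 1}\Defect(\mu,Q,X)\,\mu\bigl(Q\cap\Gamma\cap B(x_0,r_0)\bigr).\]
Let $\mathcal Q$ be the set of dyadic cubes $Q$ with $\side Q\le1$ and $Q\cap\Gamma\cap B(x_0,r_0)\ne\emptyset$, and set $\mathcal R=\bigcup_{Q\in\mathcal Q}\Delta^*_{Q,X}$. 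Using $\mu(Q\cap\Gamma\cap B(x_0,r_0))\le\mu(Q)$, each summand is at most $\sum_{R\in\Delta^*_{Q,X}}\mu(R)$ (this also disposes of the convention $\mu(Q)=0$, where the summand vanishes). Summing over $\mathcal Q$, interchanging the order of summation, and recalling that $\{Q:R\in\Delta^*_{Q,X}\}=\nabla^*_{R,X}$ has cardinality at most $C_2(n,\alpha)$ by Lemma~\ref{bounded-geometry}, I obtain
\[\int_{\Gamma\cap B(x_0,r_0)}G_{\mu,X}\,d\mu\le\sum_{R\in\mathcal R}\mu(R)\,\#\{Q\in\mathcal Q:R\in\Delta^*_{Q,X}\}\le C_2(n,\alpha)\sum_{R\in\mathcal R}\mu(R).\]

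\emph{Step 2: geometry of $\mathcal R$.} Fix $R\in\mathcal R$, say $R\in\Delta^*_{Q,X}$ with $Q\in\mathcal Q$, and pick $x^*\in Q\cap\Gamma$ (recall $\side R=\side Q$). Because $\Gamma$ is the graph of a Lipschitz function of constant at most $\alpha/2$, Lemma~\ref{geometric-lemma} applied with opening $\alpha/2$ yields $\Gamma\cap X_{x^*}(V,\alpha/2)=\emptyset$, i.e.\ $\Gamma\subseteq X_{x^*}(V,\alpha/2)^c$; on the other hand, the defining property of $\Delta^*_{Q,X}$ (see Lemma~\ref{bounded-geometry}) gives $\gap\bigl(R,X_{x^*}(V,\alpha/2)^c\bigr)\ge\diam Q$. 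Hence $\gap(R,\Gamma)\ge\diam Q=\diam R$, so in particular $R\cap\Gamma=\emptyset$. For the matching upper bound, $R$ meets $A_{Q,X}\subseteq B(x_Q,r_{Q,X})$ with $r_{Q,X}=81\sqrt n\max(\alpha,1/\alpha)\side Q$, so $\gap(R,\Gamma)\le r_{Q,X}+\diam Q\le C_3(n,\alpha)\,\side R$. Finally $Q\subseteq B(x_0,r_0+\sqrt n)$ and the Hausdorff distance from $Q$ to $R$ is at most $C_1(n,\alpha)\side Q\le C_1(n,\alpha)$ by Lemma~\ref{bounded-geometry}, so $R\subseteq B(x_0,r_0+C)$ with $C=C(n,\alpha):=C_1(n,\alpha)+\sqrt n+1>1$.

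\emph{Step 3: bounded overlap and conclusion.} By Step 2 every $R\in\mathcal R$ lies in $B(x_0,r_0+C)\setminus\Gamma$ and satisfies $\diam R\le\gap(R,\Gamma)\le C_3(n,\alpha)\,\side R$. If $y\in\RR^n\setminus\Gamma$ belongs to some $R\in\mathcal R$ with $\side R=2^{-i}$, then $\sqrt n\,2^{-i}=\diam R\le\dist(y,\Gamma)$ and $\dist(y,\Gamma)\le\gap(R,\Gamma)+\diam R\le\bigl(C_3(n,\alpha)+\sqrt n\bigr)2^{-i}$, so $2^{-i}$ is comparable to $\dist(y,\Gamma)$ up to a factor depending only on $n,\alpha$; since dyadic cubes of a common side length are disjoint, $y$ lies in at most $C_4(n,\alpha)$ members of $\mathcal R$. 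Therefore $\sum_{R\in\mathcal R}\mu(R)=\int\sum_{R\in\mathcal R}\chi_R\,d\mu\le C_4(n,\alpha)\,\mu\bigl(B(x_0,r_0+C)\setminus\Gamma\bigr)$, which is finite because $\mu$ is finite on the compact ball $B(x_0,r_0+C)$. Combining with Step 1 proves \eqref{e:integral}; the last assertion follows by letting $r_0\uparrow\infty$ along $r_0=k\in\N$, since $\Gamma=\bigcup_k\bigl(\Gamma\cap B(x_0,k)\bigr)$. The delicate point is the \emph{lower} bound $\gap(R,\Gamma)\gtrsim_{n,\alpha}\side R$ in Step 2: without it the cubes of $\mathcal R$ could accumulate through all fine generations along $\Gamma$ and the sum $\sum_{R\in\mathcal R}\mu(R)$ would diverge. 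That bound is precisely where the hypothesis ``Lipschitz constant at most $\alpha/2$'' meets the ``$\alpha/2$ versus $\alpha$'' room built into the definition of $\Delta^*_{Q,X}$; making this interplay clean is the crux, and everything else is bookkeeping with the finite-cardinality and Hausdorff-distance estimates of Lemma~\ref{bounded-geometry}.
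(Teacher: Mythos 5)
Your proposal is correct and follows essentially the same route as the paper's proof: Tonelli's theorem, the gap lower bound $\gap(R,\Gamma)\geq\diam Q$ obtained by combining Lemma \ref{geometric-lemma} (with opening $\alpha/2$) with the defining property of $\Delta^*_{Q,X}$, the matching upper bound forcing the cubes into $B(x_0,r_0+C)\setminus\Gamma$, and a bounded-overlap count. The only difference is organizational: you run the overlap argument cube-by-cube (via the $\nabla^*_{R,X}$ cardinality bound and the comparability of $\side R$ with $\dist(\cdot,\Gamma)$) instead of through the regions $T_{Q,X}=\bigcup\Delta^*_{Q,X}$ as in the paper, which is a harmless and if anything slightly more explicit bookkeeping of the same estimate.
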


\begin{proof} Let $\mu$, $V$, $\alpha$, $f$, $\Gamma$, $x_0$, and $r_0$ be fixed as in the statement of the lemma. Abbreviate $\Gamma_0=\Gamma\cap B(x_0,r_0)$. By Tonelli's theorem, \begin{align*}\int_{\Gamma_0} G_{\mu,X}(x)\,d\mu(x) &= \sum_{\side Q\leq 1} \Defect(\mu,Q,X)\int_{\Gamma_0}\chi_Q(x)\,d\mu(x) \\ &=\sum_{\side Q\leq 1} \Defect(\mu,Q,X)\,\mu(\Gamma_0\cap Q)\leq \sum_{\stackrel{\side Q\leq 1}{\mu(\Gamma_0\cap Q)>0}} \Defect(\mu,Q,X)\,\mu(Q).\end{align*} For any dyadic cube $Q$ such that $\side Q\leq 1$ and $\mu(\Gamma_0\cap Q)>0$, $$\Defect(\mu,Q,X)\,\mu(Q)=\sum_{R\in\Delta^*_{Q,X}}\frac{\mu(R)}{\mu(Q)}\mu(Q)=\mu\left(\bigcup \Delta^*_{Q,X}\right),$$ where $\bigcup\Delta^*_{Q,X}$ denotes the union of all cubes in $\Delta^*_{Q,X}$ Thus, $$\int_{\Gamma_0} G_{\mu,X}(x)\,d\mu(x) \leq \sum_{\stackrel{\side Q\leq 1}{\mu(\Gamma_0\cap Q)>0}} \mu\left(\bigcup \Delta^*_{Q,X}\right).$$ We now aim to prove that the \emph{non-tangential regions} $T_{Q,X}=\bigcup \Delta^*_{Q,X}$ associated to dyadic cubes with $\side Q\leq 1$ and $\mu(\Gamma_0\cap Q)>0$ are contained in $\RR^n\setminus\Gamma$ and have bounded overlap. This requires that we use the geometry of the Lipschitz graph $\Gamma$.

Let $Q$ be a dyadic cube of side length at most 1 such that $\mu(\Gamma_0\cap Q)>0$. Pick $a\in \Gamma_0\cap Q$. Because $\Gamma$ is the graph of a Lipschitz function over $V$ with Lipschitz constant at most $\alpha/2$, Lemma \ref{geometric-lemma} tells us that the graph $\Gamma$ is contained in $X_a(V,\alpha/2)^c$. By definition of $r_{Q,X}$ or proof of Lemma \ref{bounded-geometry}, $\gap(R,X_a(V,\alpha/2)^c)\geq \diam Q$ for all $R\in\Delta^*_{Q,X}$. Hence \begin{equation}\label{e:gap-below} \gap(T_{Q,X},\Gamma)\geq \gap(T_{Q,X},X_a(V,\alpha/2)^c)\geq \diam Q.\end{equation} If $R\in\Delta^*_{Q,X}$, then there exists $z\in R$ such that $|z-x_Q|\leq r_{Q,X}=81\max(\alpha,1/\alpha)\diam Q$. For an arbitrary point $y\in R$, $|y-a|\leq |y-z|+|z-x_Q|+|x_Q-a|$. Thus, \begin{equation}\label{e:excess-above} \excess(T_{Q,X},\Gamma) \leq \excess(T_{Q,X},\{a\}) \leq 83\max(\alpha,1/\alpha)\diam Q.\end{equation} It follows that $T_{Q,X}\subseteq B(x_0,r_0+83\sqrt{n}\max(\alpha,1/\alpha))\setminus \Gamma$. Furthermore, suppose that $Q'$ is a dyadic cube of side length $2^{-N}\side Q$ such that $\mu(\Gamma_0\cap Q')>0$. By \eqref{e:gap-below} and \eqref{e:excess-above}, $T_{Q,X}$ and $T_{Q',X}$ are disjoint if $2^{-N}83\max(\alpha,1/\alpha)<1$. Thus, if $T_{Q,X}\cap T_{Q',X}\neq\emptyset$, where $T_{Q,X}$ and $T_{Q',X}$ are non-tangential regions associated to dyadic cubes $Q$ and $Q'$ intersecting $\Gamma_0$ of side lengths $2^{-\lambda}$ and $2^{-\lambda'}$ at most 1, then $|\lambda-\lambda'|\leq C(n,\alpha)$. Another consequence of \eqref{e:excess-above} is that $\diam T_{Q,X} \leq  166\max(\alpha,1/\alpha)\diam Q$. It follows that we have bounded overlap of the non-tangential regions: $T_{Q,X}$ intersects $T_{Q',X}$ for at most $C(n,\alpha)$ other cubes $Q'$ with $\side Q'\leq 1$ and $\mu(\Gamma_0\cap Q')>0$. Therefore, $$\int_{\Gamma_0} G_{\mu,X}(x)\,d\mu(x) \leq \sum_{\stackrel{\side Q\leq 1}{\mu(\Gamma_0\cap Q)>0}} \mu\left(T_{Q,X}\right)\lesssim_{n,\alpha} \mu(B(x_0,r_0+83\sqrt{n}\max(\alpha,1/\alpha))\setminus \Gamma).$$ The last displayed quantity is finite, because $\mu$ is a Radon measure. This verifies that \eqref{e:integral} holds for all $x_0\in \Gamma$ and $r_0>0$. Hence the conical Dini function $G_{\mu,X}(x)<\infty$ at $\mu$-a.e.~$x\in \Gamma_0=\Gamma\cap B(x_0,r_0)$. Because $r_0>0$ was arbitrary and $\Gamma \subseteq \bigcup_{k=1}^\infty B(x_0,k)$,  we conclude that $G_{\mu,X}(x)<\infty$ at $\mu$-a.e.~$x\in\Gamma$.
\end{proof}

\begin{theorem}Let $\mu$ be a Radon measure on $\RR^n$. Suppose $V\in G(n,m)$ and $\alpha\in(0,\infty)$. If $\mu$ is carried by graphs of Lipschitz functions $f:V\rightarrow V^\perp$ of Lipschitz constant at most $\alpha$ and $\beta\geq 2\alpha$, then $G_{\mu,X(V,\beta)}(x)<\infty$ at $\mu$-a.e.~$x\in\RR^n$.\end{theorem}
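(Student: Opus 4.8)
The plan is to deduce the statement immediately from Proposition \ref{integral-bound}. By hypothesis there exist Lipschitz functions $f_i:V\rightarrow V^\perp$, each with Lipschitz constant at most $\alpha$, such that the graphs $\Gamma_i=\Graph(f_i)$ satisfy $\mu\bigl(\RR^n\setminus\bigcup_{i=1}^\infty\Gamma_i\bigr)=0$. The key observation is purely numerical: since $\beta\geq 2\alpha$, each $f_i$ has Lipschitz constant at most $\beta/2$, so $\Gamma_i$ is exactly the type of graph to which Proposition \ref{integral-bound} applies when we take the bad cone there to be $X=X(V,\beta)$ (i.e.\ we rename $\alpha\mapsto\beta$ in the statement of that proposition, and the requirement ``Lipschitz constant at most $\alpha/2$'' becomes ``Lipschitz constant at most $\beta/2$''). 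The ``in particular'' clause of Proposition \ref{integral-bound} then yields $G_{\mu,X(V,\beta)}(x)<\infty$ at $\mu$-a.e.\ $x\in\Gamma_i$, for every $i\geq 1$.

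It remains only to assemble the null sets. Set $N_0=\RR^n\setminus\bigcup_{i\geq 1}\Gamma_i$ and, for each $i\geq 1$, set $N_i=\{x\in\Gamma_i:G_{\mu,X(V,\beta)}(x)=\infty\}$. By the previous paragraph together with the carrying hypothesis, $\mu(N_i)=0$ for every $i\geq 0$, hence $\mu\bigl(\bigcup_{i\geq 0}N_i\bigr)=0$. If $x\notin\bigcup_{i\geq 0}N_i$, then $x\in\Gamma_i$ for some $i\geq 1$ (because $x\notin N_0$) and $G_{\mu,X(V,\beta)}(x)<\infty$ (because $x\notin N_i$). Therefore $G_{\mu,X(V,\beta)}(x)<\infty$ at $\mu$-a.e.\ $x\in\RR^n$, as claimed.

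There is essentially no obstacle beyond recognizing that the factor $2$ in the hypothesis $\beta\geq 2\alpha$ is precisely what is needed to match the $\alpha/2$ Lipschitz bound in Proposition \ref{integral-bound}; all the analytic content (the Tonelli computation, the bounded overlap of the non-tangential regions $T_{Q,X}$, and the finiteness coming from $\mu$ being Radon) has already been carried out there. If one preferred to avoid invoking the ``in particular'' clause, one could instead apply the integral bound \eqref{e:integral} to each $\Gamma_i$ over an exhaustion by balls $B(x_{0},k)$, $k\to\infty$, with a fixed $x_0\in\Gamma_i$, conclude $\int_{\Gamma_i}G_{\mu,X(V,\beta)}\,d\mu<\infty$, and then derive $\mu$-a.e.\ finiteness on $\Gamma_i$ from integrability; the result is the same.
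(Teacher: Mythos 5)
Your proposal is correct and follows essentially the same route as the paper: invoke Proposition \ref{integral-bound} with the cone $X(V,\beta)$, using that $\beta\geq 2\alpha$ makes each graph a Lipschitz graph of constant at most $\beta/2$, and then take a countable union of the resulting null sets together with the carried-by null set. The paper's own proof is the same argument with only cosmetic differences in how the exceptional sets $N_i$ are bookkept.
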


\begin{proof} The hypothesis asserts that there exist Lipschitz functions $f_1,f_2,\dots:V\rightarrow V^\perp$ with Lipschitz constant at most $\alpha$ such that $\mu(\RR^n\setminus \bigcup_1^\infty \Gamma_i)=0$, where each $\Gamma_i=\Graph(f_i)$. Let $\beta\geq 2\alpha$, so that each function $f_i$ has Lipschitz constant at most $\beta/2$. By Proposition \ref{integral-bound}, for each $i\geq 1$, there exists a set $N_i\subseteq\Gamma_i$ such that $\mu(N_i)=0$ and $G_{\mu,X(V,\beta)}(x)<\infty$ at every $x\in\Gamma_i\setminus N_i$. Set $E=\bigcup_1^\infty \Gamma_i\setminus N_i$. Then $G_{\mu,X(V,\beta)}(x)<\infty$ for every $x\in E$ and \begin{equation*}\mu(\RR^n\setminus E) \leq \mu\left(\RR^n\setminus \bigcup_1^\infty\Gamma_i\right)+\sum_1^\infty \mu(N_i)=0. \qedhere\end{equation*}
\end{proof}

\section{Proof of the Main Theorem}\label{sec:main}

Let $1\leq m\leq n-1$ and let $\mu$ be a Radon measure on $\RR^n$. Existence and uniqueness of the decomposition is standard.

\begin{lemma}\label{decomposition-lemma} There exists a unique decomposition $\mu=\mu_G+\mu_G^\perp$, where $\mu_G$ is a Radon measure that is carried by $m$-dimensional Lipschitz graphs and $\mu_G^\perp$ is a Radon measure that is singular to $m$-dimensional Lipschitz graphs.\end{lemma}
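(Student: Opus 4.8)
The plan is the standard maximality argument for decomposing a measure with respect to a family of sets that is closed under countable unions. First I would fix notation: let $\mathcal{F}$ denote the family of $m$-dimensional Lipschitz graphs, and let $\mathcal{C}$ denote the family of Borel sets of the form $\Sigma=\bigcup_{i=1}^\infty\Gamma_i$ with each $\Gamma_i\in\mathcal{F}$ (these are $F_\sigma$ sets, since each $\Gamma_i$ is closed). I would then record three soft facts. \emph{(i)} $\mathcal{C}$ is closed under countable unions, since a countable union of countable unions of Lipschitz graphs is again one. \emph{(ii)} A Radon measure $\lambda$ is carried by $\mathcal{F}$ if and only if $\lambda(\RR^n\setminus\Sigma)=0$ for some $\Sigma\in\mathcal{C}$, and $\lambda$ is singular to $\mathcal{F}$ if and only if $\lambda(\Sigma)=0$ for every $\Sigma\in\mathcal{C}$ (the latter by countable subadditivity, since $\lambda(\bigcup_i\Gamma_i)\leq\sum_i\lambda(\Gamma_i)$). \emph{(iii)} Since $\RR^n$ is $\sigma$-compact and $\mu$ is finite on compact sets, $\mu$ is $\sigma$-finite; hence there is a finite Borel measure $\nu$ that is mutually absolutely continuous with $\mu$ (e.g.\ $\nu=\sum_k 2^{-k}(1+\mu(A_k))^{-1}\mu\res A_k$ for a Borel partition $\RR^n=\bigcup_k A_k$ into sets of finite $\mu$-measure), so that a Borel set is $\mu$-null exactly when it is $\nu$-null.

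For existence, I would extract a maximal element of $\mathcal{C}$. Put $s=\sup\{\nu(\Sigma):\Sigma\in\mathcal{C}\}\leq\nu(\RR^n)<\infty$, choose $\Sigma_j\in\mathcal{C}$ with $\nu(\Sigma_j)\to s$, and set $\Sigma_\infty=\bigcup_j\Sigma_j\in\mathcal{C}$, so that $\nu(\Sigma_\infty)=s$. Define $\mu_G=\mu\res\Sigma_\infty$ and $\mu_G^\perp=\mu\res(\RR^n\setminus\Sigma_\infty)$. Both are Radon measures, being restrictions of a Radon measure to Borel sets (see \S\ref{sec:intro}), and $\mu=\mu_G+\mu_G^\perp$. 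Since $\mu_G$ vanishes off $\Sigma_\infty\in\mathcal{C}$, it is carried by $\mathcal{F}$ by fact \emph{(ii)}. If some $\Gamma\in\mathcal{F}$ had $\mu_G^\perp(\Gamma)>0$, then $\mu(\Gamma\setminus\Sigma_\infty)>0$, hence $\nu(\Gamma\setminus\Sigma_\infty)>0$, and $\Sigma_\infty\cup\Gamma\in\mathcal{C}$ would satisfy $\nu(\Sigma_\infty\cup\Gamma)=\nu(\Sigma_\infty)+\nu(\Gamma\setminus\Sigma_\infty)>s$, contradicting the definition of $s$; thus $\mu_G^\perp(\Gamma)=0$ for every $\Gamma\in\mathcal{F}$, i.e.\ $\mu_G^\perp$ is singular to $\mathcal{F}$.

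For uniqueness, suppose $\mu=\mu_1+\mu_1^\perp=\mu_2+\mu_2^\perp$ are two such decompositions. Using fact \emph{(ii)}, choose $\Sigma_i\in\mathcal{C}$ with $\mu_i(\RR^n\setminus\Sigma_i)=0$ and put $\Sigma=\Sigma_1\cup\Sigma_2\in\mathcal{C}$. Then each $\mu_i$ vanishes off $\Sigma$, while each $\mu_i^\perp$ vanishes on $\Sigma$ because $\Sigma\in\mathcal{C}$ and $\mu_i^\perp$ is singular to $\mathcal{F}$. Consequently, for every Borel set $E$,
\[
\mu_i(E)=\mu_i(E\cap\Sigma)=\mu_i(E\cap\Sigma)+\mu_i^\perp(E\cap\Sigma)=\mu(E\cap\Sigma)\qquad(i=1,2),
\]
so $\mu_1$ and $\mu_2$ agree on Borel sets, hence $\mu_1=\mu_2$ by Borel regularity; and then $\mu_1^\perp=\mu_2^\perp$ follows (e.g.\ on sets of finite $\mu$-measure by subtraction, and on all Borel sets by $\sigma$-additivity).

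I do not anticipate a genuine obstacle here — the lemma is entirely soft. The only point that needs a little care is that $\mu$ may be infinite, so the supremum used to build the maximal set $\Sigma_\infty$ should be taken against the equivalent finite measure $\nu$ rather than against $\mu$ directly; alternatively one could exhaust $\RR^n$ by compacta $K_1\subseteq K_2\subseteq\cdots$ and maximize $\mu(\,\cdot\,\cap K_k)$ simultaneously over $k$, which works equally well but is slightly more cumbersome to write down.
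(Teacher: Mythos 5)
Your proof is correct and takes essentially the same route as the paper: a Lebesgue-decomposition-style maximality argument, defining $\mu_G$ as the restriction of $\mu$ to a near-maximal countable union of Lipschitz graphs, with uniqueness by the standard soft argument. The only cosmetic difference is how local finiteness is handled---the paper takes near-maximizers of $\mu(\Gamma\cap B(0,r))$ over finite unions of graphs for each $r$, while you pass to an equivalent finite measure $\nu$; you also write out the uniqueness step that the paper defers to a reference.
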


\begin{proof} The decomposition follows from a simple modification of the usual proof of the Lebesgue decomposition theorem. For each integer $r\geq 2$, use the approximation property of the supremum to choose a set $\Gamma_r$, which is a finite union of $m$-dimensional Lipschitz graphs, such that $$\mu(\Gamma_r)\geq (1-1/r)\sup_{\Gamma}\mu(\Gamma\cap B(0,r))<\infty,$$ where the supremum ranges over all sets $\Gamma$, which are finite unions of $m$-dimensional Lipschitz graphs in $\RR^n$. Then $\mu_G=\mu\res\bigcup_{r=2}^\infty \Gamma_r$ and $\mu_G^\perp=\mu-\mu_G$. Uniqueness of the decomposition can be proved by contradiction. For full details, see e.g.~the appendix of \cite{BV}.\end{proof}

The content of Theorem \ref{t:main} over Lemma \ref{decomposition-lemma} and our task in the remainder of the proof is to identify the component measures $\mu_G$ and $\mu^\perp_G$ using the conical Dini functions.

\begin{lemma}\label{measurability} For every bad cone $X$, the conical Dini function $G_{\mu,X}$ is Borel measurable.\end{lemma}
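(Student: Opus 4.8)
The plan is to recognize $G_{\mu,X}$ as an at-most-countable sum of nonnegative Borel functions, and then invoke the standard fact that such a sum (realized as a monotone limit of its partial sums, with values in $[0,\infty]$) is Borel measurable. There is essentially no geometric content here; the argument is a bookkeeping check that each ingredient in the definition of $G_{\mu,X}$ is finite and Borel.

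First I would fix the bad cone $X=X(V,\alpha)$ and verify that each individual summand is Borel. For a single dyadic cube $Q$, Lemma \ref{bounded-geometry} guarantees that the discretized conical annulus $\Delta^*_{Q,X}$ is a \emph{finite} family of dyadic cubes; each such cube is bounded, hence has finite $\mu$-measure since $\mu$ is Radon, so $\Defect(\mu,Q,X)=\sum_{R\in\Delta^*_{Q,X}}\mu(R)/\mu(Q)$ is a well-defined number in $[0,\infty)$ whenever $\mu(Q)>0$, and the convention $\Defect(\mu,Q,X)=0$ when $\mu(Q)=0$ covers the remaining case. Consequently $x\mapsto \Defect(\mu,Q,X)\,\chi_Q(x)$ is a nonnegative constant multiple of the indicator of the half-open box $Q$; since every half-open box is a Borel subset of $\RR^n$, this function is Borel measurable.

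Second, I would enumerate the (countably many) dyadic cubes of side length at most $1$ as $Q_1,Q_2,\dots$ and write
$$G_{\mu,X}(x)=\sum_{k=1}^\infty \Defect(\mu,Q_k,X)\,\chi_{Q_k}(x)=\sup_{N\in\N}\ \sum_{k=1}^N \Defect(\mu,Q_k,X)\,\chi_{Q_k}(x),$$
where the second equality holds because every term is nonnegative (so the order of summation is immaterial and the series is a monotone limit of its partial sums). Each partial sum is a finite sum of Borel functions, hence Borel, and the pointwise supremum of a sequence of Borel functions $\RR^n\to[0,\infty]$ is Borel; therefore $G_{\mu,X}$ is Borel measurable. (Alternatively, one may group the sum by generation: for each $i\ge 0$ the cubes of side length $2^{-i}$ partition $\RR^n$, so $g_i=\sum_{\side Q=2^{-i}}\Defect(\mu,Q,X)\,\chi_Q$ is Borel, and $G_{\mu,X}=\sum_{i\ge 0}g_i$.)

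No serious obstacle arises. The only points meriting attention are the finiteness of $\Defect(\mu,Q,X)$ for each individual $Q$ — which rests on the finiteness of $\Delta^*_{Q,X}$ from Lemma \ref{bounded-geometry} together with the normalizing convention at $\mu(Q)=0$ — and the harmless rearrangement of the countable sum, which is permissible precisely because all summands are nonnegative.
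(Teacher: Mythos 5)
Your proof is correct and follows the same approach as the paper, which simply observes that $G_{\mu,X}$ is a countable linear combination of characteristic functions of Borel sets; your added checks (finiteness of $\Delta^*_{Q,X}$, the $\mu(Q)=0$ convention, and the monotone-partial-sums justification) just make that one-line argument explicit.
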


\begin{proof} By definition, each conical Dini function $G_{\mu,X}$ is a countable linear combination of characteristic functions of Borel sets.\end{proof}

\begin{lemma}\label{countable-decomposition} There exists a countable family $\mathcal{X}(n,m)$ of bad cones (independent of $\mu$) such that $G_{\mu,X}(x)<\infty$ at some $x\in\RR^n$ for some bad cone $X$ if and only if $G_{\mu,X'}(x)<\infty$ for some $X'\in\mathcal{X}(n,m)$.\end{lemma}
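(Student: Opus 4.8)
The plan is to build $\mathcal{X}(n,m)$ as a countable dense-in-parameters family of bad cones and then show that finiteness of the conical Dini function is monotone enough in the parameters $(V,\alpha)$ that passing to a nearby cone in $\mathcal{X}(n,m)$ costs nothing. Concretely, first I would fix a countable dense subset $\{V_1,V_2,\dots\}$ of the Grassmannian $G(n,m)$ (which is a compact metric space, hence separable) and the countable set of openings $\{2^{-k}:k\in\ZZ\}$, and declare $\mathcal{X}(n,m)=\{X(V_j,2^{-k}):j\in\N,\ k\in\ZZ\}$. The ``if'' direction of the claim is immediate since $\mathcal{X}(n,m)$ is a subfamily of all bad cones. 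For the ``only if'' direction, suppose $G_{\mu,X}(x)<\infty$ for some bad cone $X=X(V,\alpha)$; I must produce $X'\in\mathcal{X}(n,m)$ with $G_{\mu,X'}(x)<\infty$.

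The key monotonicity observation is the following: if $X(V',\alpha')$ is a bad cone with $V'$ close enough to $V$ and $\alpha'\le \alpha/C$ for a suitable dimensional constant $C$, then for every dyadic cube $Q$ we have $X_{Q}(V',\alpha')\subseteq X_Q(V,\alpha)$, and moreover the annular radii $r_{Q,X'}$ and $r_{Q,X}$ are comparable up to a constant depending only on $n$ and the ratio $\alpha/\alpha'$. Consequently the discretized conical annulus $\Delta^*_{Q,X'}$ is contained in a bounded-overlap union of translates of cubes from $\Delta^*_{Q,X}$ (or, after passing to a coarser generation, contained in $\Delta^*_{Q,X}$ together with a bounded number of neighbors of comparable size), so that $\Defect(\mu,Q,X')\lesssim_{n,\alpha/\alpha'}\Defect(\mu,Q,X)$ term by term — here I use Lemma \ref{bounded-geometry} to control the cardinalities and the Hausdorff distances, exactly as in the overlap arguments of Section \ref{sec:necessary}. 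Summing over dyadic cubes $Q$ of side length at most $1$ containing $x$ then gives $G_{\mu,X'}(x)\lesssim G_{\mu,X}(x)<\infty$. Now simply choose $k\in\ZZ$ with $2^{-k}\le\alpha/C$ and $j\in\N$ with $V_j$ within the required (dimension-and-$k$-dependent) angular tolerance of $V$; then $X'=X(V_j,2^{-k})\in\mathcal{X}(n,m)$ works.

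The step I expect to be the main obstacle is making the inclusion $\Delta^*_{Q,X'}\subseteq$ (bounded-overlap cover by $\Delta^*_{Q,X}$-cubes) genuinely quantitative and uniform in $Q$: one must check that shrinking the opening from $\alpha$ to $\alpha'$ and tilting $V$ slightly does not enlarge the family of cubes that can intersect the annulus, beyond a controlled factor, and that the ``excluded inner ball / gap condition'' built into the definition of $r_{Q,X}$ is preserved. Because all of these bounds are scale-invariant (they depend only on $\side Q$) and because Lemma \ref{bounded-geometry} already packages the relevant geometry, this reduces to a finite computation at the unit scale, so no essential difficulty arises — the separability of $G(n,m)$ plus this scale-invariant comparison is all that is needed. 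Finally, Borel measurability of each $G_{\mu,X'}$, $X'\in\mathcal{X}(n,m)$, is supplied by Lemma \ref{measurability}, which together with countability of $\mathcal{X}(n,m)$ ensures that the set $\{x:G_{\mu,X'}(x)<\infty\text{ for some }X'\in\mathcal{X}(n,m)\}$ is Borel — a fact that will be used in the subsequent identification of $\mu_G$.
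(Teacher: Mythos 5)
Your construction of $\mathcal{X}(n,m)$ as a dense family in the parameters $(V,\alpha)$ hinges on a comparison estimate between Dini functions of nearby cones, and that comparison is where the argument breaks. First, the stated containment goes the wrong way: since $X(V,\alpha)=\{\dist(\cdot,V)>\alpha\dist(\cdot,V^\perp)\}$, shrinking the opening \emph{enlarges} the bad cone, so $\alpha'\leq\alpha/C$ gives $X(V,\alpha')\supseteq X(V,\alpha)$, not $X_Q(V',\alpha')\subseteq X_Q(V,\alpha)$; to keep the new cone inside the old one you need $\alpha'$ \emph{larger} than $\alpha$ (with extra room to absorb the tilt of $V'$). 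More seriously, even after correcting the direction, the claimed term-by-term bound $\Defect(\mu,Q,X')\lesssim\Defect(\mu,Q,X)$ is unjustified. Changing the opening changes the annular radius $r_{Q,X}=81\sqrt{n}\max(\alpha,1/\alpha)\side Q$, so $\Delta^*_{Q,X'}$ lives in a radial shell a bounded factor away from that of $\Delta^*_{Q,X}$; its cubes are in general not cubes of $\Delta^*_{Q,X}$, and ``translates'' or neighbors of cubes in $\Delta^*_{Q,X}$ can carry arbitrarily large mass for a general Radon measure, so Lemma \ref{bounded-geometry} controls cardinalities and positions but not measures. The natural repair --- comparing $\Delta^*_{Q,X'}$ with $\Delta^*_{P,X}$ for an ancestor $P\supseteq Q$ with $\side P\approx(\alpha'/\alpha)\side Q$ --- yields only $\Defect(\mu,Q,X')\leq\Defect(\mu,P,X)\,\mu(P)/\mu(Q)$, and the factor $\mu(P)/\mu(Q)$ is uncontrolled precisely because $\mu$ is not assumed doubling. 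This is not a technicality one can wave away with scale invariance: the sensitivity of the normalization $1/\mu(Q)$ to non-doubling behavior is the central difficulty of the whole paper (cf.\ the Cs\"ornyei--K\"aenm\"aki--Rajala--Suomala example), and the lemma is a \emph{pointwise} statement at a single $x$, so it also cannot be rescued a.e.\ by cycling through Theorem \ref{t:suff} and Proposition \ref{integral-bound}.

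The paper's proof avoids any comparison between distinct cones. It observes that $G_{\mu,X}$ is determined by the sets $\Delta^*_{Q,X}$, and that by dyadic translation and dilation invariance the arrangement of $\Delta^*_{Q,X}$ relative to $Q$ is the same for every dyadic cube $Q$, hence encoded by the single configuration $\Delta^*_{Q_0,X}$ for the unit cube $Q_0$. By Lemma \ref{bounded-geometry} this configuration is a finite set of unit dyadic cubes inside a ball of radius $C(n,\alpha)$, so only countably many configurations occur as $\alpha$ ranges over $(0,\infty)$ and $V$ over $G(n,m)$. Choosing one representative cone per configuration gives a countable family for which the matching $X'$ satisfies $\Delta^*_{Q,X'}=\Delta^*_{Q,X}$ for all $Q$ and therefore $G_{\mu,X'}\equiv G_{\mu,X}$ exactly --- no estimate, no doubling, and the pointwise ``if and only if'' is immediate. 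If you want to salvage your approach, you would have to prove a genuinely new pointwise stability result for the conical Dini function under perturbation of $(V,\alpha)$, which the structure of the problem suggests is false or at least far harder than the lemma itself.
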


\begin{proof} The value of a conical Dini function $G_{\mu,X}(x)$ is determined by the value of the measure on dyadic cubes $Q$ and $R\in\Delta^*_{Q,X}$, where $Q$ ranges over all dyadic cubse of side length at most 1 that contains $x$. The cubes belonging to $\Delta^*_{Q,X}$ for any particular $Q$ are completely determined by $x_Q$, $\side Q$, and the arrangement of cubes in $\Delta^*_{Q_0,X}$, where $Q_0=[0,1)\times\cdots\times[0,1).$ By Lemma \ref{bounded-geometry}, the cubes in $\Delta^*_{Q_0,X}$ are dyadic cubes of side length 1 contained in $B(x_{Q_0},C(n,\alpha))$, where for each integer $N\geq 1$, the constant $C(n,\alpha)$ is uniformly bounded for all $\alpha\in(1/N,N)$. Therefore, there are only countably many possible configurations of $\Delta^*_{Q,X}$. Define $\mathcal{X}(n,m)$ by including exactly one bad cone $X$ for each possible configuration of $\Delta^*_{Q_0,X}$.\end{proof}

We are ready to complete the proof of Theorem \ref{t:main}. Let $\mu_1$ and $\mu_2$ be the measures defined by \begin{align*}\mu_1&=\mu\res\{x\in\RR^n:G_{\mu,X}(x)<\infty \text{ for some bad cone }X\},\\
\mu_2 &=\mu\res\{x\in\RR^n:G_{\mu,X}(x)=\infty \text{ for every bad cone }X\}.\end{align*} By Lemma \ref{countable-decomposition}, we may alternatively express \begin{align*} \mu_1 &=\mu\res\{x\in\RR^n:G_{\mu,X}(x)<\infty\text{ for some }X\in\mathcal{X}(n,m)\},\\
\mu_2 &=\mu\res\{x\in\RR^n:G_{\mu,X}(x)=\infty\text{ for every }X\in\mathcal{X}(n,m)\}.\end{align*} In view of Lemma \ref{measurability}, we conclude that $\mu_1$ and $\mu_2$ are restrictions of a Radon measure to a Borel set. Hence $\mu_1$ and $\mu_2$ are Radon.

On the one hand, for each bad cone $X$, $\mu_{G,X}=\mu\res\{x\in\RR^n:G_{\mu,X}(x)<\infty\}$ is carried by $m$-dimensional Lipschitz graphs by Theorem \ref{t:suff}. Because $\mathcal{X}(n,m)$ is countable, $\mu_+=\sum_{X\in\mathcal{X}(n,m)}\mu_{G,X}$ is also carried by Lipschitz graphs.  By Lemma \ref{countable-decomposition}, $\mu_1\leq \mu_+$. Thus, $\mu_1$ is carried by $m$-dimensional Lipschitz graphs, because the dominant measure $\mu_+$ is carried by Lipschitz graphs.

On the other hand, let $\Gamma$ be an arbitrary $m$-dimensional Lipschitz graph, say that $\Gamma=\Graph(f)$ where $f$ is a Lipschitz function $f:V\rightarrow V^\perp$ with Lipschitz constant at most $\alpha$. By Proposition \ref{integral-bound}, $G_{\mu,X(V,2\alpha)}(x)<\infty$ at $\mu$-a.e.~$x\in\Gamma$. Because $\mu_2\leq \mu$, we have $G_{\mu,X(V,2\alpha)}(x)<\infty$ at $\mu_2$-a.e.~$x\in\Gamma$, as well. By definition, the measure $\mu_2$ vanishes on $\{x\in\RR^n:G_{\mu,X(V,2\alpha)}(x)<\infty\}$. Therefore, $\mu_2(\Gamma)=0$. Since $\Gamma$ was arbitrary, we conclude that $\mu_2$ is singular to $m$-dimensional Lipschitz graphs.

It is immediate from the definition of $\mu_1$ and $\mu_2$ that $\mu=\mu_1+\mu_2$. Since $\mu_1$ and $\mu_2$ are Radon measures, $\mu_1$ is carried by $m$-dimensional Lipschitz graphs, and $\mu_2$ is singular to $m$-dimensional Lipschitz graphs, we know that $\mu_1=\mu_G$ and $\mu_2=\mu_G^\perp$ by uniqueness of the decomposition in Lemma \ref{decomposition-lemma}. This completes the proof of Theorem \ref{t:main}.

\section{Variations}\label{sec:variation}

We conclude with some remarks on flexibility in the definition of the conical defect and variations on the main theorem. The restriction to half-open dyadic cubes is hidden in the proof of the localization lemma for the $\mu$-normalized sum function (see Lemma \ref{localization}). The lemma remains valid for any system of sets $\mathscr{A}$ with the property that if $\mathcal{T}$ is a tree of sets in $\mathscr{A}$ and $\mathcal{B}$ is a subset of $\mathcal{T}$ such that $A,B\in\mathcal{B}$ and $A\subseteq B$ implies $A=B$, then $\mathcal{B}$ has bounded overlap with constants independent of $\mathcal{B}$ and $\mathcal{T}$. Of course, half-open dyadic cubes, half-open triadic cubes, etc.~enjoy this property with bounded overlap 1. One could probably design a version of the conical defect and main theorem with Euclidean balls by using the Besicovitch covering theorem instead of the localization lemma.

The main theorem remains valid if one replaces the conical defect by the larger quantity $$\sum_{R\in \Delta^*_{Q,X}} \frac{\mu(S_R)}{\mu(Q)},$$ where $S_R$ is any Borel set containing $R$ with diameter at most $C\diam R$ for some constant $1\leq C<\infty$ independent of $R$. This change requires increasing the size of the radius $r_{Q,X}$ depending on the constant $C$ to ensure that $\gap(S_R,X(V,\gamma)^c)\gtrsim \diam Q$ for some $\gamma<\alpha$. Because the sets $S_R$ may overlap, this is a slight strengthening of the necessary condition for Lipschitz graph rectifiability.

If one prefers a construction where the radius $r_{Q,X}$ of the conical annulus is independent of the cone opening $\alpha$, this can be achieved at the cost of taking the cubes $R\in\Delta^*_{Q,X}$ to have side length smaller than $Q$. In particular, there exists $\tilde r_{Q,X}$ depending only on $n$ and a \emph{jump parameter} $J\in\mathbb{N}$ depending on $n$ and $\alpha$ with the following property. If $R$ is a dyadic cube of side length $2^{-J}\side Q$ that intersects $X_Q\cap B(x_Q,\tilde r_{Q,X})\setminus B(x_Q,\tilde r_{Q,X}/3)$, then $R\cap B(x_Q,\tilde r_{Q,X}/4)=\emptyset$ and $\gap(R,X(V,\alpha/2)^c)\geq \diam R$. The proofs of the sufficient and necessary conditions with this modification are essentially the same as above, although there is more bookkeeping involving $J$.

Every $m$-dimensional plane $x_0+V$ is a Lipschitz graph over $V$ with constant at most $\alpha$ for every $\alpha>0$. It follows that $\mu\res\{x\in \RR^n: \forall_{V\in G(n,m)}\exists_{\alpha>0}\, G_{\mu,X(V,\alpha)}=\infty\}$ is singular to affine $m$-dimensional planes. However, this cannot be directly used to characterize Radon measures that are carried by or singular to planes. We leave finding such a characterization as an open problem for future research.

\bibliography{radon-graphs}
\bibliographystyle{amsbeta}

\end{document}